\documentclass[11pt]{article}

\usepackage[T1]{fontenc}
\usepackage[utf8]{inputenc}
\usepackage{lmodern}
\usepackage{microtype}
\usepackage[a4paper,left=23mm,right=23mm,top=23mm,bottom=23mm]{geometry}
\usepackage{setspace}
\usepackage{amsmath,amssymb,amsthm,mathtools}
\usepackage{enumitem}
\usepackage{xcolor}
\usepackage[colorlinks=true,linkcolor=blue!55!black,citecolor=blue!55!black,urlcolor=blue!55!black]{hyperref}
\usepackage[nameinlink,capitalise,noabbrev]{cleveref}

\setlist[itemize]{leftmargin=2.1em,itemsep=0.32em,topsep=0.42em}
\setlist[enumerate]{leftmargin=2.3em,itemsep=0.32em,topsep=0.42em}
\allowdisplaybreaks

\newtheorem{theorem}{Theorem}[section]
\newtheorem{lemma}[theorem]{Lemma}
\newtheorem{proposition}[theorem]{Proposition}

\newtheorem{conjecture}[theorem]{Conjecture}

\theoremstyle{definition}

\newtheorem{remark}[theorem]{Remark}

\crefname{theorem}{Theorem}{Theorems}
\crefname{lemma}{Lemma}{Lemmas}
\crefname{proposition}{Proposition}{Propositions}
\crefname{corollary}{Corollary}{Corollaries}
\crefname{definition}{Definition}{Definitions}
\crefname{remark}{Remark}{Remarks}
\crefname{conjecture}{Conjecture}{Conjectures}

\theoremstyle{plain}
\newtheorem*{thmA}{Theorem A}
\newtheorem*{conjB}{Conjecture B}
\newtheorem*{thmC}{Theorem C}
\newtheorem*{conjD}{Conjecture D}

\newcommand{\EE}{\mathbb E}
\newcommand{\PP}{\mathbb P}

\newcommand{\cF}{\mathcal F}
\newcommand{\cG}{\mathcal G}
\newcommand{\cL}{\mathcal L}

\newcommand{\angles}[1]{\langle #1\rangle}
\newcommand{\Aut}{\operatorname{Aut}}
\newcommand{\Inj}{\operatorname{Inj}}
\newcommand{\Autr}{\operatorname{Aut}^{\circ}}
\newcommand{\Injr}{\operatorname{Inj}^{\circ}}
\newcommand{\e}{\mathrm e}
\newcommand{\pe}{p_{\mathbb E}}
\newcommand{\pef}{p_{\mathbb E}^{*}}
\newcommand{\pc}{p_{\mathrm c}}
\newcommand{\secref}[1]{\ref*{#1}}

\title{Fractional expectation thresholds\\ and the ``second'' Kahn--Kalai conjecture}
\author{Tuan Tran
  \thanks{School of Mathematical Sciences, University of Science and Technology of China. Supported by the Excellent Young Talents Program (Overseas) of the National Natural Science Foundation of China under Grant No. GG0010007003. \texttt{trantuan@ustc.edu.cn}}
  }
\date{}

\hypersetup{pdftitle={Fractional expectation thresholds and the "second" Kahn--Kalai conjecture},
pdfauthor={Tuan Tran}}

\begin{document}
\maketitle

\begin{abstract}
We show that the uniform probability measure on copies of a
nonempty graph $H$ in $K_n$ is $Cq_H\log(2e(H))$-spread,
where $q_H$ is its graphic expectation threshold. Consequently, the fractional expectation threshold of $H$ is at most $Cq_H\log(2e(H))$.
We remove the logarithmic factor for trees and for graphs
whose average degree is at least the logarithm of their
maximum degree. This proves the ``second'' Kahn--Kalai
conjecture for these two classes, which encompass most
of the standard families studied in random graph
containment problems.
\end{abstract}

\section{Introduction}\label{sec:introduction}

Determining the threshold for a random graph to contain a prescribed
graph is  a central problem in random graph theory.
This problem encompasses both the appearance of fixed subgraphs
and the emergence of spanning structures. A basic question is how closely the threshold
is predicted by first-moment estimates for the subgraphs of the
target graph.

We first recall the general framework of expectation thresholds.  Let $\cF$ be a nonempty
increasing family of subsets of a finite set $X$, with $\varnothing\notin\cF$,
and let $X_p$ include each element of $X$ independently with probability $p$.
Its critical probability is
\[
\pc(\cF):=\inf\{p:\PP(X_p\in\cF)\geq 1/2\}.
\]
A cover of $\cF$ is a family $\cG$ such that every member of $\cF$ contains
some $S\in\cG$.  Its cost at $p$ is $\sum_{S\in\cG}p^{|S|}$.  The union bound
shows that this cost is an upper bound on $\PP(X_p\in\cF)$, so the
\emph{expectation threshold}
\[
q(\cF):=\sup\{p:\text{some cover of $\cF$ has cost at most $1/2$ at $p$}\}
\]
satisfies $q(\cF)\le\pc(\cF)$.  Park and Pham famously proved the conjecture of Kahn
and Kalai \cite{KK} that these quantities differ by at most a logarithmic factor.

\begin{thmA}[Park and Pham \cite{PP}]
There is an absolute constant $C$ such that every such increasing family
$\cF$ satisfies
\[
\pc(\cF)\le Cq(\cF)\log(2\ell(\cF)),
\]
where $\ell(\cF)$ is the largest size of a minimal member of $\cF$.
\end{thmA}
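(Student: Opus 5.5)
We prove Theorem A in the now-standard way: first pass from the expectation threshold to a spread measure, then show that a spread measure forces the threshold. Write $\ell=\ell(\cF)$ and $q=q(\cF)$, and let $\cF_{\min}$ be the family of minimal elements of $\cF$.

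\emph{Step 1 (from covers to spread).} We aim to show that either $\pc(\cF)\le Cq\log(2\ell)$ holds trivially, or some probability measure $\mu$ on $\cF_{\min}$ is $\kappa$-spread with $\kappa=O(q)$. Here $\kappa$-spread means that $\mu(\{F: F\supseteq S\})\le \kappa^{|S|}$ for every set $S$. To get this, we would argue by LP duality, as in Talagrand's fractional version. Alternatively, and this is Park--Pham's actual route, we avoid spreadness and work directly with covers.

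We take the direct route, since the fractional expectation threshold can exceed $q$. The core is the following lemma. Fix $p=Lq\log(2\ell)$, write $W=X_p$, and split $W$ into $m\approx \log_2 \ell$ independent rounds $W_1,\dots,W_m$, each with density $p'\approx Lq$. For each $F\in\cF$ with $F\subseteq X$ and each set $W$, choose a minimal $F'\in\cF$ minimizing $|F'\setminus W|$, and call $T(F,W)=F'\setminus W$ the \emph{fragment}.

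\emph{Step 2 (key counting lemma).} We show that, with $|F|\le \ell$, the fragments left after one round satisfy the following. Call $F$ \emph{bad} if $|T(F,W)|>\ell/2$ but $T$ is nonempty. Then the family of ``good'' fragments of size at most $\ell/2$ forms a cover $\cG$ of the family of sets not yet handled, except on an event of probability $O(L^{-1/2})$. The proof counts pairs $(W,T)$ with $W\cup T\supseteq F'$ for the minimal $F'$ in $\cF$. For each size $t$, the number of pairs $(W, T)$ with $|T|=t$ and $|W|=w$ is at most $\binom{N}{w+t}\cdot 2^{w+t}\cdot(\text{choice of }T\text{ inside }W\cup T)$. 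The decisive point is that $T$ is determined by $W\cup T$ together with the set $F'$, and $F'$ is chosen canonically. So the relevant count is $\sum_{Z}\#\{F'\subseteq Z\}$, with the expected number controlled via $p^{-t}$ factors. Comparing the uniform measure on $w$-sets with the binomial one, we get that the expected cost $\sum_{T}q^{|T|}$ of the minimal fragments exceeds $1/2$ only with probability at most $(C/L)^{t}$. This is the main obstacle. The delicate part is the ``minimum fragment'' trick, which is what makes the map $(W,F)\mapsto (W\cup T, T)$ nearly injective. We handle it exactly as Park--Pham do, by encoding $T$ as a subset of $W\cup T$ of size $t$, which loses only $\binom{w+t}{t}\le (e(w+t)/t)^t$. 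This is absorbed because $p'\gg q$.

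\emph{Step 3 (iteration).} Since $q$ is the expectation threshold, the fragment family $\cG_i$ left after round $i$ cannot be a cheap cover of the remaining family unless success has already occurred. In each round, then, either $W_1\cup\dots\cup W_i$ already contains a member of $\cF$, or the fragment size halves, except on an event of probability $\sum_t (C/L)^t\le 1/(4m)$ once $L$ is a large constant. Note that the factor $\log(2\ell)$ pays for the $m$ rounds, not for a union bound over $t$. After $m$ rounds the fragments are empty, so $W\in\cF$ with probability at least $1/2$. Hence $\pc(\cF)\le mp'\le Cq\log(2\ell)$.
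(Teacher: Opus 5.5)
The paper does not prove Theorem A; it quotes it from Park and Pham. So the benchmark is the Park--Pham argument you say you follow. Your outline has its outer shape, but the counting step that carries the proof fails as written.

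\textbf{The counting step.} Encoding $T$ as a $t$-subset of $Z=W\cup T$ bounds the number of pairs $(W,T)$ by $\binom{N}{w+t}\binom{w+t}{t}$, which is the number of \emph{all} disjoint pairs. It yields only $\EE\,\#\{T:|T|=t\}\le\binom{N-w}{t}$. That bound uses nothing about $\cF$, and it gives cost about $(\mathrm{e}Nq/t)^t$, which is not small in general.
\begin{itemize}
\item The density $p'$ cancels exactly: you gain $(N/w)^t$ and lose about $(\mathrm{e}w/t)^t$. So ``absorbed because $p'\gg q$'' is false.
\item The factor $2^{w+t}$ and the count $\sum_Z\#\{F'\subseteq Z\}$ are worse still.
\end{itemize}

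\textbf{The missing idea.} $T$ lies inside a set determined by $Z$ alone, of size at most the current size bound $k$.
\begin{itemize}
\item Suppose $T=T(S,W)=S'\setminus W$, where $S'$ minimizes $|S'\setminus W|$ over members of the current family $\mathcal H$ with $S'\subseteq W\cup S$. Then $T\subseteq S$.
\item Hence every $S''\in\mathcal H$ with $S''\subseteq Z$ is a competitor in that minimization, so $|S''\setminus W|\ge t$. Also $S''\setminus W\subseteq Z\setminus W=T$.
\item Thus $T\subseteq S''$ for every such $S''$, in particular for a canonical choice $S^*(Z)$.
\end{itemize}
Given $Z$, there are therefore at most $\binom{k}{t}\le 2^k$ choices of $T$. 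The expected cost at $p$ of fragments of size $t$ is at most $2^k(pN/w)^t$. This is why ``bad'' must mean $t>k/2$: only then is $2^k<4^t$ absorbed, giving $(4/L)^t$ when $w=LpN$.

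\textbf{The definition of the fragment.} Two constraints are missing.
\begin{itemize}
\item Your definition omits $F'\subseteq W\cup F$. As written, $T(F,W)$ does not depend on $F$ and need not lie in $F$. This constraint is what makes fragments subsets of the original sets, which the cover argument needs.
\item You minimize over $\cF$ rather than over the surviving fragments of the previous round. Minimizing over the current family is what keeps $|S^*(Z)|\le k_{i-1}$ rather than $\ell$.
\end{itemize}

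\textbf{The finish.} It is not a per-round union bound. With $L$ constant, $\sum_t(C/L)^t\le1/(4m)$ fails in the rounds with $k_{i-1}=O(1)$, where the sum starts at $t=1$. In Step~2 you also reverse the roles: the \emph{bad} fragments must have small cost, while the good ones form the next family.

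The correct conclusion runs as follows.
\begin{itemize}
\item Collect the bad fragments $\mathcal U$ from all $m=\lfloor\log_2\ell\rfloor+1$ rounds.
\item If no minimal member keeps a good fragment through every round, then $\mathcal U$ covers $\cF$. Its cost at $p=2q$ then exceeds $1/2$.
\item But its expected cost is $\sum_i O\bigl((4/L)^{k_{i-1}/2}\bigr)=O(L^{-1/2})$, because the $k_i$ are distinct positive integers.
\item By Markov's inequality, with probability at least $1/2$ some member survives. Its final fragment is empty, and unwinding the chain of minimizers gives a member of $\cF$ inside $W_1\cup\cdots\cup W_m$.
\end{itemize}
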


The main difficulty in applying this theorem is often to estimate $q(\cF)$:
an upper bound must rule out every cover with small cost.  For graph
containment, we can ask for a bound based directly on subgraph counts.
Fix a graph $H$ with at least one edge and at most $n$ vertices, and let
$\cF_H$ be the family of graphs on $[n]$ containing a copy of $H$.  Write
$\pc(H):=\pc(\cF_H)$.  Throughout, containment is non-induced, and isolated
vertices are omitted unless stated otherwise.

For graphs $I$ and $G$, let $\Inj(I,G)$ count the injective maps from $V(I)$
to $V(G)$ that preserve edges.  Put $\Aut(I):=\Inj(I,I)$ and
$N_I(G):=\Inj(I,G)/\Aut(I)$, the number of copies of $I$ in $G$.  If
$X_I:=N_I(G_{n,p})$, then $\EE_pX_I=N_I(K_n)p^{e(I)}$.  The
\emph{graphic expectation threshold} is
\[
\pe(H):=\inf\{p:\EE_pX_I\geq 1/2
\text{ for every nonempty }I\subseteq H\}.
\]
For every such $I$, its copies in $K_n$ cover
$\cF_H$, so $\pe(H)\le q(\cF_H)\le\pc(H)$.  The following conjecture asks
whether Theorem A remains true with $\pe(H)$ in place of $q(\cF_H)$.

\begin{conjB}[Kahn and Kalai \cite{KK}]
There is an absolute constant $C$ such that every nonempty $H\subseteq K_n$
satisfies
\[
\pc(H)\le C\pe(H)\log(2e(H)).
\]
\end{conjB}

Although now called the ``second'' Kahn--Kalai conjecture
\cite{MNSZ,DKP}, this was the starting point for Kahn and Kalai's original
work \cite{KK,DKP}.

Fractional covers provide another way to bound the threshold.
Allowing nonnegative weights in the definition of a cover yields
Talagrand's fractional expectation threshold $q_f(\cF)$, which
satisfies $q(\cF)\le q_f(\cF)\le\pc(\cF)$.
Frankston, Kahn, Narayanan, and Park proved the following
fractional version of the Kahn--Kalai conjecture, proposed
by Talagrand \cite{Talagrand10}.

\begin{thmC}[Frankston, Kahn, Narayanan, and Park \cite{FKNP}]
There is an absolute constant $C$ such that every nonempty increasing family
$\cF$ with $\varnothing\notin\cF$ satisfies
\[
\pc(\cF)\le Cq_f(\cF)\log(2\ell(\cF)),
\]
where $\ell(\cF)$ is the largest size of a minimal member of $\cF$.
\end{thmC}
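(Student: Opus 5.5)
The plan is the ``spread measure'' route: first turn the fractional hypothesis into a spread probability measure on minimal members of $\cF$ via LP duality, then prove a spread lemma showing that a random set of density $Cq\log(2\ell)$ contains a member with probability at least $1/2$. Write $q:=q_f(\cF)$ and $\ell:=\ell(\cF)$, and let $p=Kq$ for a large constant $K$ to be chosen.

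\emph{Step 1 (duality).} The fractional expectation threshold is defined through the LP that minimises $\sum_S w_S p^{|S|}$ over weights $w\ge 0$ with $\sum_{S\subseteq A} w_S\ge 1$ for every minimal $A\in\cF$. Take $p'$ slightly above $2q$. Then no fractional cover has cost at most $1/2$ at $p'$, so the LP optimum at $p'$ exceeds $1/2$. By LP duality there are weights $\lambda_A\ge 0$ on the minimal members with
\[
\sum_A\lambda_A>1/2 \quad\text{and}\quad \sum_{A\supseteq S}\lambda_A\le (p')^{|S|}\ \text{for all } S .
\]
Normalising gives a probability measure $\mu$ on minimal members of $\cF$ with $\mu(\{A: A\supseteq S\})\le 2(p')^{|S|}\le (Cq)^{|S|}$ for all nonempty $S$. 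In other words, $\mu$ is $O(q)$-spread, and it is supported on sets of size at most $\ell$.

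\emph{Step 2 (one round of the spread lemma).} Let $W$ be a uniform random set of size $w\approx Cq|X|$, or equivalently $X_{Cq}$. For each $A$ in the support, define the fragment $T(A,W)$ to be $A'\setminus W$, where $A'\subseteq A\cup W$ is a member of the support minimising $|A'\setminus W|$. The key lemma claims
\[
\EE_W\,\mu\bigl(\{A:|T(A,W)|\ge k\}\bigr)\le (C_0/C)^{k}\cdot \text{(binomial factor)} .
\]
The proof is a counting/encoding argument. Each bad pair $(W,A)$ is encoded by the set $Z=W\cup T$ together with the identity of $T$ as a subset of some $A''\subseteq Z$ of size at most $\ell$, which costs at most a factor $2^{\ell}$, or better $\binom{\ell}{k}$. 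The count of $Z$'s is then compared to the count of $W$'s. The spread property, $\mu(A\supseteq T)\le (Cq)^{|T|}$, pays for the choice of $T$. Minimality of the fragment is used to show that $T$ is determined by $Z$ and this small amount of extra data.

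\emph{Step 3 (iteration).} Apply Step 2 to $r\approx\log_2\ell$ independent rounds $W_1,\dots,W_r$. After round $i$, replace $\mu$ by the pushforward measure on fragments, which is again $O(q)$-spread relative to the new ground set. The maximal size then drops from $\ell$ to roughly $\ell/2$ except on an event of small probability. Once the fragments have constant size, one final round of density $Cq$ covers them with probability at least $3/4$, again using the spread bound together with a second-moment/Janson-type estimate. The total density is $O(q\log(2\ell))$, which gives $\pc(\cF)\le Cq\log(2\ell)$.

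The main obstacle is Step 2. The encoding must produce a factor per fragment element of order $1/C$, rather than $\ell/C$, so that the halving rounds only cost $\log\ell$ in total. I would follow the Park--Pham/FKNP style of choosing the minimum fragment and counting the pairs $(Z,T)$, summing over $|Z|=w+t$. I also expect that the bookkeeping needed to keep the spread property intact under the pushforward will need care.
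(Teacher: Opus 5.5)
The paper does not prove Theorem~C. It imports the result from Frankston--Kahn--Narayanan--Park and uses it as a black box, for instance in \cref{lem:spread-threshold}. The only argument the paper gives toward it is \cref{rem:coupling-fractional-kk}, and that argument is conditional on \cref{conj:reserve-coupling}.

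That remark shares your Step~1: duality yields a $2q_f(\cF)$-spread law on minimal members. Your choice of $p'$ slightly above $2q$ is harmless, and any $p'>q$ would do. The remark then replaces your Steps~2--3 by one application of the conjectured coupling, with $a=2q_f(\cF)$, $r=\ell(\cF)$ and $p=Ca\log(2r)$. Since $B\cup X_p\in\cF$ surely, the product measure at density $p+Ca$ lies in $\cF$ with probability at least $3/4$.

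Your proposal instead outlines essentially the unconditional FKNP argument: minimum fragments, the encoding $Z=W\cup T$, about $\log_2\ell$ halving rounds of density $O(q)$, and a final round. This outline is sound. The trade-off is that your iteration pays the logarithm round by round and only certifies that $W_1\cup\dots\cup W_r$ contains a member. It gives no domination of $\cL(B\cup X_p)$ by a single product measure at additive cost $O(a)$. That additive cost is exactly what \cref{prop:coupling-implies-kkc2} needs to absorb the $O(\log(2e(H)))$ blocks of \cref{sec:cores} with one logarithm. Running your argument once per block would lose a second logarithm.

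Three vague places in your sketch should be made explicit.

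\emph{The encoding.} $A''$ must be a canonical function of $Z$ alone, for example the first support member contained in $Z$ in a fixed order. Then $A''\subseteq Z\subseteq W\cup A$ and $A''\setminus W\subseteq T$. Minimality of the fragment forces $A''\setminus W=T$, so naming $T$ costs at most $\binom{\ell}{t}$.

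\emph{Your ``main obstacle''.} It disappears once ``bad'' means $|T|\ge\ell_i/2$, where $\ell_i$ is the current size bound. Then $\binom{\ell_i}{t}\le 2^{\ell_i}\le 4^t$, so the expected bad mass in round $i$ is at most $(4C_0/C)^{\ell_i/2}$.

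\emph{The bookkeeping.} Do not renormalize: restrict $\mu$ to good $A$ and push forward. Since $T(A,W)\subseteq A$, the cylinder bounds $\mu(\{A:A\supseteq U\})\le (C_1q)^{|U|}$ carry over verbatim, and the encoding never uses total mass one. You then only need the expected total discarded mass $\sum_i(4C_0/C)^{\ell_i/2}$ to be small, and then apply Markov. This sum is dominated by the last, constant-size rounds. For the final round, take fragments of size at most $s$ with surviving mass $m$, and set $Y=\sum_B\nu(B)p^{-|B|}\mathbf{1}[B\subseteq X_p]$. Its second moment is at most $m\bigl(m+(1+C_1q/p)^s-1\bigr)$, so Paley--Zygmund gives success probability at least $m/\bigl(m+(1+C_1q/p)^s-1\bigr)$.
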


Write $\pef(H):=q_f(\cF_H)$.  Since the minimal members of $\cF_H$ are the
copies of $H$, Theorem C gives
$\pc(H)\le C\pef(H)\log(2e(H))$.  Thus Conjecture B would follow from
$\pef(H)\le C\pe(H)$.

To bound $\pef(H)$, we use spread probability measures.
A probability measure $\mu$ on subgraphs of $K_n$ is
\emph{$a$-spread} if
$\mu(\{S:A\subseteq E(S)\})\le a^{|A|}$
for every nonempty edge set $A\subseteq E(K_n)$.
An $a$-spread measure supported on copies of $H$ gives
$\pef(H)\le a$, and hence a threshold bound by Theorem~C.

We focus on the uniform probability measure $\nu_H$ on
the copies of $H$ in $K_n$, for which spreadness can be
expressed in terms of copy counts. Indeed, if
$A\subseteq E(K_n)$ is nonempty and $J$ is the graph
formed by its edges and their endpoints, then
\[
\nu_H(\{S:A\subseteq E(S)\})
=\frac{N_J(H)}{N_J(K_n)}
=\frac{\Inj(J,H)}{(n)_{v(J)}}.
\]
This identity, used by Mossel, Niles-Weed, Sun, and Zadik
\cite{MNSZ}, is recalled in \cref{lem:cylinder-identity}.

We replace the expected count $1/2$ by $1$ and set
\[
q_H:=\max_{\varnothing\neq I\subseteq H}N_I(K_n)^{-1/e(I)}.
\]
Thus $\EE_{q_H}X_I\geq1$ for every nonempty $I\subseteq H$, and
$\pe(H)\le q_H\le2\pe(H)$.  Dubroff, Kahn, and Park proposed the following
counting conjecture, which we state here as an equivalent spread bound.

\begin{conjD}[Dubroff, Kahn, and Park {\cite{DKP}}]
There is an absolute constant $C$ such that $\nu_H$ is $Cq_H$-spread for every
nonempty $H\subseteq K_n$.
\end{conjD}

The fractional Kahn--Kalai theorem (Theorem~C), together
with $q_H\le2\pe(H)$, shows that Conjecture~D implies
Conjecture~B. In a breakthrough toward Conjecture~D,
Dubroff, Kahn, and Park \cite{DKP} proved that
$\nu_H$ is $Cq_H\log^2 n$-spread for every nonempty
$H\subseteq K_n$. To put this advance in perspective,
Keevash, Lifshitz, Long, and Minzer \cite{KLLM} had noted
that Conjecture~B was ``widely open'' even with the
logarithmic factor replaced by $n^{o(1)}$.

\subsection{Our results}

We prove the second Kahn--Kalai conjecture for trees and for graphs
whose average degree is at least the logarithm of their maximum
degree. For arbitrary graphs, we obtain a bound with one additional
logarithm. Our approach is based on counting copies of subgraphs inside the target graph.

\begin{theorem}\label{thm:one-log-fractional}
There is an absolute constant $C$ such that $\nu_H$ is
$Cq_H\log(2e(H))$-spread for every nonempty $H\subseteq K_n$.
\end{theorem}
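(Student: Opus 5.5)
The plan is to bound the cylinder probabilities in the form given by the identity of \cref{lem:cylinder-identity}. Fix a nonempty edge set $A\subseteq E(K_n)$ and let $J$ be the graph it spans, so $J$ has no isolated vertices. We must show
\[
\Inj(J,H)\le \bigl(Cq_H\log(2e(H))\bigr)^{e(J)}(n)_{v(J)}.
\]
If $\Inj(J,H)=0$ there is nothing to prove, so assume $J$ embeds in $H$. The only information about $q_H$ we will use is that, for every nonempty $I\subseteq H$,
\[
q_H^{e(I)}\ge \frac{\Aut(I)}{(n)_{v(I)}}.
\]
In particular this holds for the subgraphs $I$ of $H$ that we build from pieces of $H$ adapted to $J$. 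The idea is to count $\Inj(J,H)$ step by step, and to pay for each step with one factor $q_H$ per edge, obtained from a well-chosen test graph $I$.

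\textbf{Step 1: reduce to an edge-by-edge count.} Order the edges of $J$ so that each component is revealed connectedly. Embedding edges one at a time, each new edge is of one of three types:
\begin{itemize}
\item it closes a cycle, giving at most $1$ choice;
\item it attaches a new vertex to an already placed vertex $x$, giving at most $\deg_H(\phi(x))$ choices;
\item it starts a new component, giving at most $2e(H)$ choices.
\end{itemize}
Closing edges cost $1$ but still earn a factor $q_H$, and $q_H n\ge 1$ always holds because $I$ can be a single edge. Hence it suffices to handle the case where $J$ is a forest. We must then compare $\prod(\text{choices})$ against $n^{v(J)}q_H^{e(J)}$, losing at most a factor $(C\log 2e(H))^{e(J)}$.

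\textbf{Step 2: dyadic degree classes.} Partition $V(H)$ into classes $V_t=\{v:2^{t}\le\deg_H v<2^{t+1}\}$, for $0\le t\le\log_2(2e(H))$. For an embedding $\phi$ of the forest $J$, record for each vertex of $J$ the class of its image. There are at most $(\log_2 (4e(H)))^{v(J)}\le (C\log 2e(H))^{e(J)}$ class patterns, and this is exactly where the logarithm enters. For a fixed pattern, the count factorises as follows. For each class $t$, let $m_t$ be the number of vertices of $J$ placed in $V_t$ and $d_t$ the number of their children. The count is at most $\prod_t |V_t|^{r_t}(2^{t+1})^{d_t}$, where $r_t$ is the number of roots placed in $V_t$. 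Each factor is compared with a test graph $I_t\subseteq H$. For this we take a star forest consisting of $\min(|V_t|,\cdot)$ vertex-disjoint stars with centres in $V_t$ and $\approx 2^{t}$ leaves. Such a star forest exists by a greedy argument, after discarding a constant fraction of each star. It has automorphism group of size $s!(k!)^{s}$ for $s$ stars with $k$ leaves each. The inequality $q_H^{e(I_t)}\ge\Aut(I_t)/(n)_{v(I_t)}$ then gives roughly
\[
q_H\gtrsim \frac{k}{n}\Bigl(\frac{s}{n}\Bigr)^{1/k}.
\]
This is precisely what is needed to pay for $|V_t|$ root choices and $2^{t}$ child choices per edge, using Stirling and $(n)_{v}\ge (n/e)^{v}$ for $v\le n$.

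\textbf{Main obstacle.} The delicate step is the accounting in Step 2 when $J$ uses a class only a few times while $|V_t|$ is large. The star-forest bound gives the per-edge factor only after averaging over $k$ edges, so roots have to be charged against several edges. I would first try assigning each root of $J$ (a forest) to one of its incident edges and using the weaker single-star bound
\[
q_H\ge \bigl(k!/n^{k+1}\bigr)^{1/k},
\]
with $k$ chosen as the number of children actually present. Isolated edges of $J$ mapped into low-degree parts can be covered by a matching in $H$, and a matching of size $e(H)/(2\Delta)$ exists. If this charging leaks an extra factor, I expect it to be at most a constant per edge, which is absorbed into $C$.
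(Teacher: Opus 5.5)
\textbf{There is a genuine gap, already in Step~1.} Your reduction to forests runs in the wrong direction. The target is $\Inj(J,H)\le(n)_{v(J)}\bigl(Cq_H\log(2e(H))\bigr)^{e(J)}$. A closing edge $f$ adds no vertex, so it multiplies the right-hand side by $Cq_H\log(2e(H))$, and this factor is less than $1$ whenever the statement is nontrivial. The fact that $f$ has at most one choice only gives $\Inj(J,H)\le\Inj(J-f,H)$; it does not pay for the lost factor. Your claim $nq_H\ge1$ is also false: for $H=K_2$ one has $q_H=\binom n2^{-1}$. Even if it were true it would not help, since the budget for a closing edge is $q_H$, not $nq_H$. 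Concretely, for $H=J=K_3$ the test graph $I=K_3$ gives $q_H\asymp1/n$. The required bound for the triangle is then of constant order, while the bound for its spanning path is of order $n$. Paying for closing edges means bounding counts of cycles and denser pieces of $H$, which needs test graphs $I$ containing cycles; a degree-by-degree count cannot supply this. Compare the paper's degree-based argument for \cref{thm:density}: even after combining spanning-tree counts with packings and $q_H\ge n^{-2/d_*}$, it loses a factor $\Delta^{2/d_*}$ rather than a logarithm.

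Step~2 is incomplete even for forests. The greedy claim that each class $V_t$ carries about $|V_t|$ vertex-disjoint stars with $\Omega(2^t)$ leaves is false. In $H=K_{D,M}$ with $M\ge 2D$, the $M$ vertices of degree $D$ all share the same $D$ neighbors, so only $O(1)$ such stars exist. The charging of roots and of non-root vertices in large classes, which you flag as the main obstacle, is left as an expectation rather than an argument.

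The paper avoids counting embeddings of $J$ altogether. With $a=64\e\,q_H$, it chooses $C(F)\subseteq F$ maximizing $\nu_F(\angles{A})/a^{|A|}$. When $\nu_F$ is not $a$-spread, the first-moment bound $\binom rt(q_H/a)^t$ forces $e(C(F))<e(F)/64$. Maximality then makes $S\setminus T$ conditionally $a$-spread given a copy $T$ of the core. Iterating gives a nested chain of uniform copies with $O(\log e(H))$ conditionally spread blocks. Summing over the $(\ell+1)^{|A|}$ assignments of edges of $A$ to blocks gives the single logarithm, which thus comes from the chain length, not from degree classes.
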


Consequently, $\pef(H)\le C\pe(H)\log(2e(H))$, and Theorem C
gives $\pc(H)\le C\pe(H)\log^2(2e(H))$.
Thus Conjecture B holds up to one additional logarithmic factor.

Our next result relates the loss to the degree and density of $H$.  Write
$\Delta(H)$ for its maximum degree and
$d_*(H):=\max_{\varnothing\neq J\subseteq H}\frac{2e(J)}{v(J)}$
for its maximum subgraph average degree.  
\begin{theorem}\label{thm:density}
There is an absolute constant $C$ such that $\nu_H$ is
$C\Delta(H)^{2/d_*(H)}q_H$-spread for every nonempty $H\subseteq K_n$.
\end{theorem}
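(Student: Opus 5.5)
The plan is to bound the cylinder probabilities directly through the identity $\nu_H(\{S:A\subseteq E(S)\})=\Inj(J,H)/(n)_{v(J)}$ of \cref{lem:cylinder-identity}. Here $J$ is the graph spanned by $A$; it has no isolated vertices and $e(J)=|A|$. It therefore suffices to show
\[
\Inj(J,H)\le (n)_{v(J)}\bigl(C\Delta^{2/d_*}q_H\bigr)^{e(J)},\qquad \Delta=\Delta(H),\ d_*=d_*(H),
\]
for every nonempty $J$ without isolated vertices. If $J\not\subseteq H$ there is nothing to prove. Since $v(J)\le 2e(J)$ and $(n)_{v}\ge (n/\e)^{v}$, constant factors per vertex are absorbed into $C^{e(J)}$. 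So I may replace $(n)_{v(J)}$ by $n^{v(J)}$ throughout.

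\emph{Step 1 (greedy upper count).} Let $J$ have components $J_1,\dots,J_c$ with $k_i=v(J_i)\ge2$. For each component, fix a spanning tree and an ordering in which every vertex after the first two is adjacent to an earlier one. Choosing the image of the first edge ($\le 2e(H)$ ways) and then each later vertex among the $\le\Delta$ neighbours of an already placed vertex gives
\[
\Inj(J,H)\le (2e(H))^{c}\,\Delta^{v(J)-2c}.
\]

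\emph{Step 2 (lower bounds on $q_H$ from test subgraphs).} Taking $I$ to be a subgraph of $H$ attaining $d_*$ gives $q_H\ge N_I(K_n)^{-1/e(I)}\ge n^{-v(I)/e(I)}=n^{-2/d_*}$. Since $2e(J)/d_*\le v(J)$ and $\Delta/n\le1$, this yields
\[
\bigl(\Delta^{2/d_*}q_H\bigr)^{e(J)}\ge (\Delta/n)^{2e(J)/d_*}\ge(\Delta/n)^{v(J)}.
\]
This already handles the extension factor $\Delta^{v(J)}/n^{v(J)}$ of Step 1. What remains is the root factor $(2e(H)/\Delta^2)^{c}$. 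To absorb it, I will use a second test subgraph: a matching $M_c\subseteq H$ with $c$ edges. Such a matching exists whenever $J\subseteq H$, since each component of $J$ contains an edge and the components are disjoint. The matching gives
\[
q_H^{c}\ge N_{M_c}(K_n)^{-1}\ge (c/(\e n^2))^{c}.
\]
Together with the fact that $H$ has a matching of size $\ge e(H)/(2\Delta)$, this controls $e(H)$ in terms of $n^2q_H$ and $\Delta$. I would then split $J$ into its $c$ "root edges" and the remaining $e(J)-c$ edges. The factor $(C q_H)^{c}$ comes from the matching bound and $(C\Delta^{2/d_*}q_H)^{e(J)-c}$ from the density bound, applied to $J$ with the root edges contracted in the count.

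\emph{Main obstacle.} The difficulty is exactly this bookkeeping of the root factor. A naive matching bound gives $2e(H)/n^2\lesssim \Delta q_H$, which loses a full factor $\Delta$ per component instead of $\Delta^{2/d_*}$. To fix this, I would first try to choose the test subgraph jointly rather than separately: take $I\subseteq H$ to be a disjoint union of $c$ copies of a densest subgraph, or a subgraph of that shape mixed with the image of $J$. The aim is a single inequality $q_H^{e(I)}\ge \Aut(I)/(n)_{v(I)}$ that simultaneously pays for the $c$ free roots and the $v(J)-2c$ extensions. The $\Aut(I)\ge c!$ symmetry among the components is what should cancel the $(2e(H))^c$ overcount. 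If this does not close, the fallback is to prove the bound componentwise by induction on $c$, removing one component at a time and using $q_H$ for the subgraph $J_i\cup(\text{rest})$.
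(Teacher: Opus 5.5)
Your proposal has a genuine gap, and it sits exactly at what you call the main obstacle. The root factor is never paid for, and the test subgraphs you propose cannot pay for it: a densest subgraph, a matching, $c$ disjoint copies of a densest subgraph, or the image of $J$.

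Take $k\ge3$ fixed, $H$ the disjoint union of $n/k$ copies of $K_k$, and $J=K_k$. Then $c=1$, $\Delta=d_*=k-1$ and $\Delta^{2/d_*}\le3$. Here $\Inj(J,H)=n(k-1)!$. The densest-subgraph bound only gives $n^kq_H^{\binom k2}\ge n^k/\binom nk\approx k!$, and the matching bound $q_H\gtrsim1/n$ gives nothing better. So no argument that uses only these lower bounds on $q_H$ can reach $\Inj(J,H)\le n^k(C\Delta^{2/d_*}q_H)^{\binom k2}$; a factor of order $n$ is missing. Induction on $c$ cannot help, since $c=1$. The true $q_H$ is of order $n^{-2/k}$, witnessed by $I=H$, that is, by $n/k$ disjoint copies of $J$.

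Your Step~2 is also lossy. Bounding $(\Delta/n)^{2e(J)/d_*}$ by $(\Delta/n)^{v(J)}$ throws away a factor $n/\Delta$ per component. That is precisely the root factor $(2e(H)/\Delta^2)^c\le(n/\Delta)^c$ you then cannot absorb. If you compare component by component against the exponent $v(J_i)-1$ instead, your count $2e(H)\Delta^{v(J_i)-2}\le n\Delta^{v(J_i)-1}$ together with $q_H\ge n^{-2/d_*}$ already suffices whenever $(v(J_i)-1)/e(J_i)\ge2/d_*$. The clique example, with ratio $2/k<2/(k-1)$, lies in the other regime.

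The missing idea, which the paper uses for that regime, is a packing of the witness component itself. Let $J\subseteq H$ be connected with $v$ vertices and $s$ edges, and let $b=b_H(J)$ be the maximum number of vertex-disjoint copies of $J$ in $H$. The $bv$ vertices of a maximum packing meet every copy of $J$. So every embedding sends some vertex of $J$ into this set, and starting the spanning-tree count there gives $\Inj(J,H)\le bv^2\Delta^{v-1}$: the root now costs $bv^2$ rather than $n$ or $2e(H)$. Applying the definition of $q_H$ to the test subgraph $bJ\subseteq H$, with $\Aut(bJ)=\Aut(J)^b\,b!$, gives $b\le\e n^vq_H^s/\Aut(J)$. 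Combining,
\[
\bigl(\Inj(J,H)/n^v\bigr)^{1/s}\le4\e\,\Delta^{(v-1)/s}\min\{n^{-(v-1)/s},\,q_H\}.
\]
A case split on $(v-1)/s$ versus $2/d_*$, using $q_H\ge n^{-2/d_*}$ and $\Delta\le n$, bounds the right-hand side by $4\e\Delta^{2/d_*}q_H$; one then multiplies over components. The essential point is that the number of disjoint copies in the test subgraph must be the packing number $b_H(J_i)$, which can be of order $n$, not the number $c$ of components of $J$.
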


The factor
$\Delta(H)^{2/d_*(H)}$ is bounded whenever $d_*(H)=\Omega(\log\Delta(H))$.
In particular, this factor is bounded for graphs with a bounded ratio of
maximum to minimum degree, with a constant depending only on that ratio. Thus Conjectures D and B hold
for all nonempty regular graphs, with constants independent
of the degree.

Trees can have large maximum degree while $d_*(H)<2$.
Our third result removes the degree dependence for this class.

\begin{theorem}\label{thm:count}
There is an absolute constant $C$ such that $\nu_H$ is
$Cq_H$-spread for every tree $H\subseteq K_n$ with at least
one edge.
\end{theorem}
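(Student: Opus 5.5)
Since $H$ is a tree, the only nonempty edge sets $A$ that matter are those whose graph $J$ is a subforest of $H$; otherwise $\Inj(J,H)=0$. By \cref{lem:cylinder-identity} it therefore suffices to show
\[
\Inj(J,H)\le (Cq_H)^{e(J)}\,(n)_{v(J)}\qquad\text{for every nonempty forest }J\subseteq H,
\]
where $J$ has components $T_1,\dots,T_k$, so that $v(J)=e(J)+k$. The plan has two parts. First, bound the left side by a combinatorial quantity depending only on the degree sequence of $H$ and the shape of $J$. Second, exhibit a witness $I\subseteq H$ for which the definition $q_H^{e(I)}\ge N_I(K_n)^{-1}$ gives a matching lower bound on $q_H$.

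For the upper bound, root each component $T_i$ at a vertex $r_i$. Then embed $J$ into $H$ by breadth-first search. The root $r_i$ has at most $v(H)\le n$ images. A vertex $x$ of $J$ with $c_x$ children, whose image is $v$, has at most $(\deg_H(v))_{c_x}$ ways to place its children injectively. Hence
\[
\Inj(J,H)\le \prod_{i=1}^k\Bigl(\sum_{\phi}\prod_{x\in T_i}(\deg_H\phi(x))_{c_x}\Bigr),
\]
and the $n^k$ from the roots is absorbed by the $k$ extra factors of $n$ in $(n)_{v(J)}$. What remains is to show, edge by edge, that each edge of $J$ costs at most $Cq_Hn$ on average. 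I would decouple the sum over images by Hölder's inequality, or by a greedy argument always choosing the worst vertex. This reduces the bound to moment sums $\sum_{v\in V(H)}(\deg_H v)^{c}$ and to the counts $\binom{\deg_H v}{s}$ of stars $K_{1,s}$ in $H$.

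For the lower bound on $q_H$, the natural witnesses in a tree are star forests. Let $I$ consist of $t$ vertex-disjoint stars $K_{1,s}$ in $H$, for instance centred at an independent set of vertices of degree at least $s$. Then $e(I)=ts$ and
\[
N_I(K_n)=\frac{(n)_{t(s+1)}}{t!\,(s!)^t},
\quad\text{so}\quad
q_H^{ts}\ge \frac{t!\,(s!)^t}{n^{t(s+1)}}.
\]
Roughly, $q_H\gtrsim (s/\e n)\,(t/\e n)^{1/s}$. Taking $I=H$ itself also gives $q_H\ge (\Aut(H)/(n)_{v(H)})^{1/e(H)}$, which handles the regime where $J$ is nearly all of $H$. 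I would group the vertices $x$ of $J$ according to $c_x$ at dyadic scales $s\in[2^j,2^{j+1})$. At each scale, I would compare the contribution $\sum_v(\deg_H v)_{c_x}$ with the star forest built from the high-degree vertices of $H$ at that scale. The factor $t!$ in $N_I(K_n)$ is exactly what should pay for the choice of root images.

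The main obstacle is this matching step. The witness $I$ must be a single subgraph of $H$, but the contributions come from many different child counts $c_x$, possibly overlapping neighbourhoods, and the $k$ roots. A naive product of scale-by-scale bounds loses a factor like $\log\Delta(H)$, which is precisely the loss that \cref{thm:density} incurs. My first attempt would use the tree structure of $H$ to extract, inside $H$, a single star forest. I would take a maximal collection of disjoint stars, using that in a tree one can pick a large subset of high-degree vertices whose stars are almost disjoint, since the edges between centres number fewer than the centres. The star sizes would be chosen greedily to maximise $\prod_x(\deg)_{c_x}$. This should let all scales be charged to one $I$ simultaneously, with only a constant loss per edge.
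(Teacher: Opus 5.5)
Your plan has a genuine gap, and it comes before the matching step you flag. You intend to decouple the embedding sum, by H\"older or by a greedy worst-vertex choice, into moment sums $\sum_v(\deg_H v)^c$ and star counts. Either way the resulting upper bound depends only on the degree sequence of $H$. No such bound can be matched by $q_H$, whatever witnesses you use.

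Here is a counterexample. Take $s$ large and $n=\lceil \e^{s^2}\rceil$. Let $P$ be the complete $s$-ary tree of depth two, so $e(P)=s^2+s$ and $\Aut(P)=(s!)^{s+1}$. Let $H$ be $P$ with a path of length $2s^3$ attached at its root. Let $H'$ be a caterpillar with the same degree sequence, whose $s+1$ vertices of degree $s+1$ (hubs) lie on the spine at mutual distance at least $s^2$.

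Since $\Inj(P,H)\ge(s!)^{s+1}$, any degree-sequence bound for $\Inj(P,\cdot)$ is at least $(s!)^{s+1}$ on $H'$ as well. But $q_{H'}=O(1/n)$. To see this, take a forest $I\subseteq H'$ with $c(I)$ components and $q=N_I(K_n)^{-1/e(I)}$; then $(nq)^{e(I)}\le \Aut(I)\,2^{v(I)}n^{-c(I)}$. A component with $h\ge 1$ hubs has $O(((s+1)!)^h)$ automorphisms and at least $(h-1)s^2$ edges. The factor $n^{-c(I)}\le \e^{-s^2c(I)}$ absorbs $c(I)!$ and one factor $(s+1)!$ per component. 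So the bound you must prove on $H'$ is at most $(Cq_{H'})^{s^2+s}(n)_{s^2+s+1}\le \e^{s^2}(C')^{s^2+s}$. This is smaller than $(s!)^{s+1}\ge (s/\e)^{s^2+s}$ once $s$ is large. What matters is how the high-degree vertices sit relative to one another, and the moment sums discard exactly that.

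The same example shows that star forests together with $I=H$ are too weak a witness family, even if you keep the structure. In $H$, a star forest with $t$ components and $e$ edges has $e\le (s+1)t$, so $n^{-t/e}\le \e^{-s^2/(s+1)}$ and it yields only $nq=O(s\e^{-s})$. Taking $I=H$ yields only $nq=O(1)$, because $(s!)^{s+1}$ is diluted over about $2s^3$ edges. Yet the spread bound at $A=E(P)$ needs $Cnq\ge s/\e^2$ for whatever lower bound $q$ you feed in. The witness achieving this is $P$ itself, a non-star subtree. In general you need packings $bP$ of whichever subtree types dominate the local embedding counts.

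The paper is organised to find these witnesses. It bounds $\sum_F\Inj(F,H)x^{e(F)}$ at $x\asymp 1/(nq_H)$ by rooted subtree sums over the descendant trees $T_u$, and compares each sum with its largest term (\cref{lem:scalar}). It then selects a maximizing rooted subtree $R_u$ for every $u$, and splits the selected subtrees of each rooted type with $j$ edges into at most $j+1$ vertex-disjoint families (\cref{lem:disjoint}). Finally it applies \cref{lem:packing-bound} with the witness $bP$ for that very type $P$. No decoupling over vertices occurs, which is why no $\log\Delta$ factor is lost.
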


This proves Conjecture D for trees. It also gives the bounds
$\pef(H)\le C\pe(H)$ and
$\pc(H)\le C\pe(H)\log(2e(H))$, which imply Conjecture B
for trees.
In their companion paper, Dubroff, Kahn, and Park \cite{DKP2}
proved the counting bound in Conjecture D for clique, cycle,
and bounded-degree forest witnesses inside an arbitrary $H$.
They explicitly asked to remove the degree restriction on tree
witnesses. \Cref{thm:count} gives this extension when the host
$H$ is itself a tree. It does not settle their question for
arbitrary hosts.

For comparison, Montgomery \cite{Montgomery} proved that, for each fixed
$\Delta$, the random graph $G_{n,C_\Delta\log n/n}$ contains every $n$-vertex
tree of maximum degree at most $\Delta$ simultaneously with high probability.
For bounded-degree spanning trees, $q_H=O_\Delta(1/n)$, so \cref{thm:count}
recovers the $\log n/n$ order for the threshold of each individual tree.
It also gives $\pc(H)\le Cq_H\log n$ without a degree restriction.  This
extends the threshold bound for individual trees to the scale given by
$q_H$. It does not show that all such trees appear at once.

\subsection{Methodology}
Dubroff, Kahn, and Park \cite{DKP} prove their general
bound by decomposing a fixed witness into rooted pieces and counting
its extensions.  For \cref{thm:one-log-fractional}, we construct a single
chain of subgraphs of $H$ that controls every witness.  At each step,
we maximize the ratio of a subgraph's containment probability under the
current uniform copy law to a proposed $Cq_H$-spread bound.  The
first-moment constraints force the resulting core to have at most a
fixed fraction of the current edges, while maximality makes the
remaining edges $Cq_H$-spread conditional on any copy of the core.

Iterating gives $L=O(\log(2e(H)))$ blocks.  Starting with a uniform copy
of the last core, we extend uniformly along the chain; symmetry
preserves the uniform copy law at each step.  Conditional spreadness bounds
the probability of each assignment of $t$ fixed witness edges to the
blocks by $(Cq_H)^t$.  Summing over
the $L^t$ assignments gives the single logarithmic loss in
\cref{thm:one-log-fractional}.  Under the coupling conjecture in
\cref{sec:coupling-conjecture}, we could absorb these blocks successively
into a product measure of density $O(q_H\log(2e(H)))$, proving
Conjecture B.

The other two proofs start from a simple packing observation of Dubroff,
Kahn and Park \cite{DKP}:  If $J$ is connected and $b_H(J)$ is the
maximum number of vertex-disjoint copies of $J$ in $H$, then
\[
b_H(J)\le \frac{\e n^{v(J)}q_H^{e(J)}}{\Aut(J)}.
\]
To pass from disjoint copies to all copies,
\cref{sec:density} uses the vertices of a maximal packing to start a
spanning-tree embedding count.  In \cref{sec:trees}, we collect tree
embeddings in a rooted subtree sum, compare it with its largest weighted
term, and use packings to average these largest terms over $H$.

\section{Preliminaries}\label{sec:preliminaries}

All graphs are finite and simple; a nonempty graph has at least one
edge. Subgraphs need not be connected, and isolated vertices are
omitted unless explicitly allowed. All logarithms are natural, and
absolute constants may change from line to line. We use the falling
factorial $(n)_v:=n(n-1)\cdots(n-v+1)$, with $(n)_0=1$.
The definitions of $\pe(H)$ and $q_H$ give
\begin{equation}\label{eq:q-versus-pe}
\pe(H)\le q_H\le2\pe(H).
\end{equation}

For $F\subseteq K_n$, let $\nu_F$ be the uniform probability
measure on the copies of $F$ in $K_n$. For $A\subseteq E(K_n)$,
write $\angles A:=\{S\subseteq K_n:A\subseteq E(S)\}$.
We recall the standard counting identity \cite[eq.~(5)]{MNSZ}.

\begin{lemma}\label[lemma]{lem:cylinder-identity}
Let $A$ be a nonempty edge set and let $J$ be the graph formed by
its edges and their endpoints. Then
\[
\nu_F(\angles A)
=\frac{N_J(F)}{N_J(K_n)}
=\frac{\Inj(J,F)}{(n)_{v(J)}}.
\]
\end{lemma}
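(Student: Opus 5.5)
We prove both equalities by a double count of pairs and then by rewriting copy counts as embedding counts. For the first equality, note that $\nu_F(\angles A)$ is the fraction of copies $S$ of $F$ in $K_n$ with $A\subseteq E(S)$. Since $A$ is nonempty and $J$ has no isolated vertices, the condition $A\subseteq E(S)$ says exactly that the fixed copy $J_A$ of $J$ (edges $A$ with their endpoints) is a subgraph of $S$. We count the pairs $(S,J')$, where $S$ is a copy of $F$ in $K_n$ and $J'\subseteq S$ is a copy of $J$.

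First, fix $S$ and count its copies of $J$: this gives $N_F(K_n)\,N_J(F)$ pairs. Alternatively, fix $J'$. The symmetric group on $[n]$ acts transitively on copies of $J$ in $K_n$ and preserves the relation ``$J'\subseteq S$''. Hence the number $m$ of copies $S$ of $F$ containing $J'$ does not depend on $J'$, and the count is $N_J(K_n)\,m$. Comparing the two counts gives
\[
\nu_F(\angles A)=\frac{m}{N_F(K_n)}=\frac{N_J(F)}{N_J(K_n)}.
\]
Isolated vertices of $F$ need no special care: copies are edge sets, and they play no role in containment of $J'$.

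For the second equality, write $N_J(F)=\Inj(J,F)/\Aut(J)$ and $N_J(K_n)=\Inj(J,K_n)/\Aut(J)$. Every injection $V(J)\to[n]$ preserves edges in $K_n$, so $\Inj(J,K_n)=(n)_{v(J)}$. One small convention check is needed: $\Inj(J,F)$ counts injections into $V(F)$, and we must make sure that allowing or omitting isolated vertices of $F$ changes nothing. It does not, because $J$ has no isolated vertices, so every vertex of $J$ maps to a non-isolated vertex of $F$. Cancelling $\Aut(J)$ finishes the proof.

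The only step needing care is the transitivity argument, namely that $m$ is independent of $J'$. Writing it out explicitly (as an $S_n$-orbit argument) is the main point, and it is routine.
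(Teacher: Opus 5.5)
Your proof is correct: the double count of pairs $(S,J')$, with symmetric-group transitivity making the number $m$ of copies of $F$ through a fixed copy of $J$ independent of that copy, followed by $N_J(\cdot)=\Inj(J,\cdot)/\Aut(J)$ and $\Inj(J,K_n)=(n)_{v(J)}$, is the standard argument. The paper does not reprove this lemma (it recalls it from Mossel, Niles-Weed, Sun, and Zadik), but it uses exactly the same double count of nested copies, $N_{F'}(K_n)M=N_F(K_n)N_{F'}(F)$, in the proof of \cref{lem:flag-law}.
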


A fractional cover of the copies of $H$ in $K_n$ assigns weights
$w_A\geq0$ to $A\subseteq E(K_n)$ such that
$\sum_{A\subseteq E(S)}w_A\geq1$ for every copy $S$ of $H$ in $K_n$.
Its cost at $p$ is $\sum_Aw_Ap^{|A|}$, and $\pef(H)$ is the
supremum of the $p$ for which a cover of cost at most $1/2$ exists.

The following consequence of fractional-cover duality and the
fractional Kahn--Kalai theorem is standard; see
\cite[Observation~7.4 and Theorem~1.1]{FKNP}.

\begin{lemma}\label[lemma]{lem:spread-threshold}
There is an absolute constant $C$ such that, whenever $H$ is
nonempty and $\nu_H$ is $a$-spread,
\[
\pef(H)\le a,
\qquad
\pc(H)\le Ca\log(2e(H)).
\]
\end{lemma}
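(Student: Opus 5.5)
The lemma has two parts, and I will prove them in order: first the bound on $\pef(H)$ from linear-programming duality, then the threshold bound from Theorem~C.

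\textbf{Step 1: $\pef(H)\le a$.} Fix any $p>a$ and any fractional cover $(w_A)$ of the copies of $H$. I will show that its cost at $p$ exceeds $1/2$.

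Draw $S$ from $\nu_H$. Since $S$ is a copy of $H$, the cover condition gives $\sum_{A\subseteq E(S)}w_A\ge1$ pointwise. Take expectations and exchange the order of summation. Using $a$-spreadness, this gives
\[
1\le \sum_{A}w_A\,\nu_H(\angles A)\le \sum_{A}w_A a^{|A|}.
\]
A nonempty $A$ satisfies $a^{|A|}<p^{|A|}$. The empty set can be discarded: if $w_\varnothing\ge1$, then the cost is already at least $1>1/2$.

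Hence every fractional cover has cost at least $\sum_A w_Ap^{|A|}\ge\sum_A w_Aa^{|A|}\ge1>1/2$ at $p$. Therefore no $p>a$ admits a cover of cost at most $1/2$, and $\pef(H)\le a$.

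One subtlety is whether $w_\varnothing$ is permitted. Either convention works. If $w_\varnothing$ is allowed, the argument just given handles it. If it is not, the inequality $\nu_H(\angles A)\le a^{|A|}$ applies to every term.

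\textbf{Step 2: $\pc(H)\le Ca\log(2e(H))$.} Apply Theorem~C to $\cF_H$. This family is nonempty and increasing, and it excludes $\varnothing$ because $H$ has an edge. Its minimal members are exactly the copies of $H$, so $\ell(\cF_H)=e(H)$. Theorem~C then gives
\[
\pc(H)\le Cq_f(\cF_H)\log(2e(H))=C\pef(H)\log(2e(H))\le Ca\log(2e(H)).
\]

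\textbf{Main obstacle.} There is no real difficulty here. The only point needing care is that the definition of $\pef$ used in the paper matches $q_f$ in Theorem~C. The expectation/union-bound argument in Step~1 is exactly the weak-duality half of \cite[Observation~7.4]{FKNP}, so I would cite that observation as a check on this point.
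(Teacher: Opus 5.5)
Your proof is correct and takes the same route as the paper, which cites the weak-duality half of fractional-cover duality (Observation~7.4 of Frankston, Kahn, Narayanan, and Park) to get $\pef(H)\le a$ and then applies Theorem~C with $\ell(\cF_H)=e(H)$; your expectation argument over $S\sim\nu_H$ is that weak-duality step written out. The case split on $w_\varnothing$ is unnecessary, because $\nu_H(\angles{\varnothing})=1=a^0=p^0$, so your chain of inequalities already holds term by term for the empty set as well.
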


Thus our task is to bound the embedding ratios in
\cref{lem:cylinder-identity}.

\section{Ratio-maximizing cores and the proof of Theorem~\secref{thm:one-log-fractional}}\label{sec:cores}

We will construct a uniform copy of $H$ as the union of a short sequence
of conditionally spread edge sets.  We first choose a subgraph whose containment
probability has the largest ratio to a proposed spread bound.  Maximizing this ratio has two consequences: the selected subgraph
has few edges, and conditioning on a copy of it makes the remaining edges
spread.  Repeating this choice reduces the edge count by a fixed factor at each
step, so we need only $O(\log(2e(H)))$ steps.

Put $a:=64\e\,q_H$. If $a\ge1$, every probability measure is
$a$-spread, and the theorem follows by increasing its absolute
constant. We therefore assume $a<1$. Throughout this section and the next, we suppress $E(\cdot)$ when treating graphs as edge sets, while retaining $v(G)$
and $e(G)$ for vertex and edge counts.

Let $F\subseteq H$ be nonempty. For every edge set
$A\subseteq E(K_n)$, define $\Psi_F(A):=\nu_F(\angles A)/a^{|A|}$.
Fix a nonempty subgraph $C(F)\subseteq F$ whose edge set
maximizes $\Psi_F$ over all such $A$, and call it the
\emph{ratio-maximizing core} of $F$.
Such a choice is possible because the maximum is positive:
every maximizer is therefore contained in a copy of $F$ and can be relabeled into $F$ without changing its value
under $\Psi_F$.
This maximizing choice gives the following conditional spread bound.

\begin{lemma}\label[lemma]{lem:core}
Let $F\subseteq H$ with $\nu_F$ not $a$-spread. Then
$e(C(F))<e(F)/64$. Moreover, let $T$ be any fixed copy
of $C(F)$ in $K_n$, and let $S$ be a uniformly random
copy of $F$ in $K_n$. Conditional on $T\subseteq S$,
the remaining edge set $S\setminus T$ is $a$-spread:
for every $U\subseteq E(K_n)$,
\[
\PP\bigl(U\subseteq S\setminus T\mid T\subseteq S\bigr)
\le a^{|U|}.
\]
\end{lemma}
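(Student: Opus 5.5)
The plan is to prove the two assertions separately. Both use the fact that $\Psi_F$ is invariant under relabelling vertices of $K_n$, because $\nu_F$ is invariant under the symmetric group. Consequently every copy $T$ of $C(F)$ in $K_n$ is also a maximizer of $\Psi_F$. Write $J:=C(F)$.

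\emph{Edge bound.} Since $\nu_F$ is not $a$-spread, some nonempty $A$ has $\Psi_F(A)>1$, so $\Psi_F(J)>1$. I would bound $\Psi_F(J)$ from above using \cref{lem:cylinder-identity} and the definition of $q_H$. Since $J\subseteq F\subseteq H$ is nonempty, $N_J(K_n)=(n)_{v(J)}/\Aut(J)\ge q_H^{-e(J)}$. Hence
\[
\Psi_F(J)=\frac{N_J(F)}{N_J(K_n)\,a^{e(J)}}\le N_J(F)\Bigl(\frac{q_H}{a}\Bigr)^{e(J)}=N_J(F)\,(64\e)^{-e(J)}.
\]
Each copy of $J$ in $F$ is determined by its edge set, which is an $e(J)$-subset of $E(F)$. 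Therefore
\[
N_J(F)\le\binom{e(F)}{e(J)}\le\Bigl(\frac{\e\, e(F)}{e(J)}\Bigr)^{e(J)},
\]
and so
\[
\Psi_F(J)\le\Bigl(\frac{e(F)}{64\,e(J)}\Bigr)^{e(J)}.
\]
Combined with $\Psi_F(J)>1$, this forces $e(J)<e(F)/64$. The constant $64\e$ in $a$ is chosen exactly so that this works.

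\emph{Conditional spreadness.} Fix a copy $T$ of $J$ and a set $U\subseteq E(K_n)$. If $U$ meets $T$, the probability is $0$, since $U\subseteq S\setminus T$ is then impossible. If $U=\varnothing$, the bound is trivial. Otherwise $U$ is disjoint from $T$ and nonempty, and
\[
\PP\bigl(U\subseteq S\setminus T\mid T\subseteq S\bigr)=\frac{\nu_F(\angles{T\cup U})}{\nu_F(\angles T)}=\frac{\Psi_F(T\cup U)\,a^{|T|+|U|}}{\Psi_F(T)\,a^{|T|}}\le a^{|U|}.
\]
The inequality holds because $T$ maximizes $\Psi_F$ over all edge sets, including $T\cup U$. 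Note that $\nu_F(\angles T)>0$, so the conditioning is well defined.

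The only delicate point is the relabelling invariance, that is, that a fixed copy $T$ is itself a maximizer. This follows because $\nu_F(\angles{\sigma A})=\nu_F(\angles A)$ for every permutation $\sigma$ of $[n]$. Everything else is a direct computation.
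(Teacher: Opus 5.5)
Your proposal is correct and follows the paper's proof essentially step for step. For the edge count you use $1<\Psi_F(C(F))\le\binom{e(F)}{e(C(F))}(q_H/a)^{e(C(F))}\le\bigl(e(F)/(64\,e(C(F)))\bigr)^{e(C(F))}$, and for the conditional bound you use relabelling invariance together with maximality of $\Psi_F$ at $T$ to control $\nu_F(\angles{T\cup U})/\nu_F(\angles T)$, with the same case split on whether $U$ meets $T$.
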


\begin{proof}
Write $r=e(F)$ and $t=e(C(F))$. Since $\nu_F$ is not
$a$-spread, the maximizing choice of $C(F)$ gives
$\Psi_F(C(F))>1$. Also,
$N_{C(F)}(F)\le\binom rt$, and, since $C(F)\subseteq H$,
the definition of $q_H$ gives
$N_{C(F)}(K_n)^{-1}\le q_H^t$.
Thus \cref{lem:cylinder-identity} yields
\[
1<\Psi_F(C(F))
\le\binom rt \left(\frac{q_H}{a}\right)^t
\le\left(\frac{r}{64t}\right)^t,
\]
where the last inequality uses $\binom rt\le(\e r/t)^t$ and $a=64\e\,q_H$.
Hence $t<r/64$.

For the conditional assertion, fix a copy $T$ of $C(F)$
in $K_n$. By invariance under relabeling, $T$ also maximizes
$\Psi_F$, and $\nu_F(\angles T)>0$.
Let $U\subseteq E(K_n)$. If $U$ meets $T$, the event
$U\subseteq S\setminus T$ is impossible. Otherwise,
\[
\PP\bigl(U\subseteq S\setminus T\mid T\subseteq S\bigr)
=\frac{\nu_F(\angles{T\cup U})}{\nu_F(\angles T)}
=a^{|U|}\frac{\Psi_F(T\cup U)}{\Psi_F(T)}
\le a^{|U|},
\]
where the final inequality follows from maximality.
\end{proof}

Starting with $F_0=H$, define a descending chain of graphs.  If $\nu_{F_i}$ is
$a$-spread, stop.  Otherwise set $F_{i+1}:=C(F_i)$. By
\cref{lem:core},
\begin{equation}\label{eq:rank-geometric}
e(F_{i+1})<e(F_i)/64.
\end{equation}
Every $F_i$ is a nonempty subgraph of $H$, so the edge counts are positive integers
and the process stops.  Let $F_{\ell}$ be the last graph, where $\ell\geq0$.  If
$\ell\geq1$, then iterating \eqref{eq:rank-geometric} gives $1\le e(F_{\ell})<e(H)/64^{\ell}$,
so in every case
\begin{equation}\label{eq:chain-length}
\ell\le\log_{64}e(H).
\end{equation}

We now sample a sequence of nested copies of these graphs.  First choose $S_{\ell}$
uniformly among the copies of $F_{\ell}$.  For $i=\ell-1,\ell-2,\ldots,0$, conditionally on
$S_{i+1}$, choose $S_i$ uniformly among the copies of $F_i$ that contain
$S_{i+1}$.

\begin{lemma}\label[lemma]{lem:flag-law}
For $i=0,\ldots,\ell$, the distribution of $S_i$ is $\nu_{F_i}$.
For $i<\ell$, conditionally on $S_{i+1}$, the block $B_i:=S_i\setminus S_{i+1}$ is $a$-spread in the
sense of \cref{lem:core}.  The last block $B_{\ell}:=S_{\ell}$ has the $a$-spread
distribution $\nu_{F_{\ell}}$.
\end{lemma}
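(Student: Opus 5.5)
The plan is to prove the three assertions of \cref{lem:flag-law} in order, with the distributional claim $S_i\sim\nu_{F_i}$ first, by downward induction on $i$ from $\ell$ to $0$. Once this is known, the conditional spread statement will follow directly from \cref{lem:core}. The last-block assertion is immediate: $S_\ell\sim\nu_{F_\ell}$ by construction, and $\nu_{F_\ell}$ is $a$-spread because the chain stopped at $F_\ell$.

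For the induction, the base case $i=\ell$ holds by definition. Suppose $S_{i+1}\sim\nu_{F_{i+1}}$ with $i<\ell$, and fix a copy $S$ of $F_i$ in $K_n$. We compute
\[
\PP(S_i=S)=\sum_{T}\PP(S_{i+1}=T)\,\frac{\mathbf 1[T\subseteq S]}{M(T)},
\]
where $T$ ranges over copies of $F_{i+1}$, and $M(T)$ is the number of copies of $F_i$ containing $T$. The key observation is symmetry under the action of the symmetric group $\mathfrak S_n$ on vertex labels. This group acts transitively on copies of $F_{i+1}$, and any relabeling carrying $T$ to $T'$ bijects the copies of $F_i$ containing $T$ onto those containing $T'$. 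Hence $M(T)=M$ is independent of $T$, and $M>0$ because $F_{i+1}\subseteq F_i$. The number of copies $T\subseteq S$ is $N_{F_{i+1}}(F_i)$, which is likewise independent of $S$. Therefore
\[
\PP(S_i=S)=\frac{N_{F_{i+1}}(F_i)}{N_{F_{i+1}}(K_n)\,M},
\]
which is constant in $S$, so $S_i\sim\nu_{F_i}$.

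For the conditional spread claim, fix $i<\ell$ and a value $T$ of $S_{i+1}$, which is a copy of $F_{i+1}=C(F_i)$. By construction, $S_i$ given $S_{i+1}=T$ is uniform over copies of $F_i$ containing $T$. This is exactly the law of a uniform copy $S$ of $F_i$ conditioned on $T\subseteq S$; the conditioning event has positive probability, as shown in \cref{lem:core}. Because $F_i$ is not $a$-spread (otherwise the chain would have stopped at $F_i$), \cref{lem:core} applies with $F=F_i$ and gives $\PP(U\subseteq S_i\setminus T\mid S_{i+1}=T)\le a^{|U|}$ for every $U$.

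The only step needing care is the symmetry argument showing that $M(T)$ does not depend on $T$; everything else is bookkeeping. One must also check that $C(F_i)$ is treated as a subgraph of $F_i$ up to relabeling, which the definition of the ratio-maximizing core guarantees.
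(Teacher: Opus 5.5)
Your proposal is correct and follows essentially the same route as the paper: downward induction showing $\PP(S_i=S)$ is the same for every copy $S$ of $F_i$, with the conditional spread and last-block claims read off directly from \cref{lem:core} and the stopping rule. The only cosmetic difference is that you deduce uniformity from this constancy (after explicitly justifying via relabeling that $M$ is independent of the copy of $F_{i+1}$). The paper instead evaluates the constant as $1/N_{F_i}(K_n)$ using the double count $N_{F_{i+1}}(K_n)M=N_{F_i}(K_n)N_{F_{i+1}}(F_i)$.
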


\begin{proof}
The conditional spread assertion follows directly from \cref{lem:core}, and the
last assertion holds because the chain stopped at $F_{\ell}$.  We verify the
distribution.

Assume by induction that $S_{i+1}$ is uniform.  Write $F=F_i$ and
$F'=F_{i+1}$, and let $M$ be the number of copies
of $F$ containing a fixed copy of $F'$.  Counting pairs of nested copies in two ways gives
$N_{F'}(K_n)M=N_F(K_n)N_{F'}(F)$.  For a fixed copy $Q$ of $F$ there are exactly
$N_{F'}(F)$ copies of $F'$ inside $Q$.  Therefore
\[
\PP(S_i=Q)
=
\frac{N_{F'}(F)}{N_{F'}(K_n)M}
=
\frac1{N_F(K_n)}.
\]
Thus $S_i$ is uniform.  Starting from $S_{\ell}$ and working backward proves the claim at every
step.
\end{proof}

The blocks $B_0,\ldots,B_{\ell}$ are pairwise disjoint and partition $S_0$.

\begin{lemma}\label[lemma]{lem:tagged}
Let $A_0,\ldots,A_{\ell}\subseteq E(K_n)$ be pairwise disjoint fixed edge sets.  Then
\[
\PP\bigl(A_i\subseteq B_i\text{ for }i=0,\ldots,\ell\bigr)
\le
a^{\sum_{i=0}^{\ell}|A_i|}.
\]
\end{lemma}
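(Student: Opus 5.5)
The plan is to peel off the blocks one at a time by conditioning along the sampling order. The sequence runs from $S_\ell$ up to $S_0$, so the natural filtration is $\sigma(S_\ell)\subseteq\sigma(S_\ell,S_{\ell-1})\subseteq\cdots$. The event $\{A_i\subseteq B_i \ \forall i\}$ is the intersection of the events $E_i:=\{A_i\subseteq B_i\}$. Moreover, $E_\ell,\dots,E_{i+1}$ are all determined by $S_{i+1}$, because the process is built so that $S_{j}\supseteq S_{j+1}$ and $B_j=S_j\setminus S_{j+1}$ for $j\ge i+1$.

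I will prove by downward induction on $k$ the bound
\[
\PP\bigl(E_\ell\cap\cdots\cap E_k\bigr)\le a^{\sum_{i=k}^{\ell}|A_i|}.
\]
\begin{itemize}
\item \emph{Base case $k=\ell$.} This is the statement that $B_\ell=S_\ell$ has law $\nu_{F_\ell}$, which is $a$-spread by \cref{lem:flag-law}. If $A_\ell=\varnothing$ the bound reads $\le1$ and is trivial.
\item \emph{Inductive step.} Write
\[
\PP(E_\ell\cap\cdots\cap E_{k-1})=\EE\bigl[\mathbf 1_{E_\ell\cap\cdots\cap E_k}\,\PP(E_{k-1}\mid S_\ell,\ldots,S_k)\bigr].
\]
The key point is that, given $S_\ell,\dots,S_k$, the law of $S_{k-1}$ depends only on $S_k$: it is uniform over copies of $F_{k-1}$ containing $S_k$, and this is exactly how the sampling was defined.
\end{itemize}

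Hence $\PP(E_{k-1}\mid S_\ell,\ldots,S_k)=\PP(A_{k-1}\subseteq S_{k-1}\setminus S_k\mid S_k)$. Since $S_k$ is a fixed copy $T$ of $F_k=C(F_{k-1})$, this conditional probability is exactly the quantity bounded in \cref{lem:core}. The law of $S_{k-1}$ given $S_k=T$ is the uniform copy of $F_{k-1}$ conditioned on $T\subseteq S$, because every copy containing $T$ is equally likely. \Cref{lem:core} therefore gives the bound $a^{|A_{k-1}|}$ pointwise. Taking it out of the expectation and applying the inductive hypothesis finishes the step. At $k=0$ we obtain the claim.

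The only genuinely delicate step is the Markov-type identification: conditioning on the whole history $S_\ell,\dots,S_k$ is the same as conditioning on $S_k$, and the conditional law of $S_{k-1}$ given $S_k=T$ coincides with $\nu_{F_{k-1}}$ conditioned on $\angles T$. Both follow directly from the construction, where $S_{k-1}$ is drawn uniformly from copies containing $S_k$ using fresh randomness. So I expect no real obstacle, only bookkeeping.

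Disjointness of the $A_i$ is not actually needed for the bound, beyond making the events meaningful. Overlapping $A_i$ would make the probability zero, since the blocks $B_i$ are disjoint.
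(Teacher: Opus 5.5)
Your proposal is correct and is essentially the paper's argument: condition on the history $(S_\ell,\ldots,S_{i+1})$, note that $S_i$ is then uniform over copies of $F_i$ containing $S_{i+1}$ so that \cref{lem:core} bounds the conditional probability by $a^{|A_i|}$, and finish with the spreadness of $\nu_{F_\ell}$ for the last block. One small slip: the events $\{A_j\subseteq B_j\}$ for $j>i$ are determined by $(S_{i+1},\ldots,S_\ell)$, not by $S_{i+1}$ alone, since the copy $S_{i+2}$ inside $S_{i+1}$ need not be unique; this is harmless because your displayed inductive step already conditions on the full history.
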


\begin{proof}
For $i=0,\ldots,\ell$, let $\cG_i$ be the $\sigma$-algebra
generated by $S_i,\ldots,S_\ell$, representing the information
revealed by these copies. For $0\le i<\ell$, the copies $S_{i+1},\ldots,S_\ell$ determine all the blocks $B_j$ with $j>i$, so the event $A_j\subseteq B_j$ for every $j>i$ is
$\cG_{i+1}$-measurable.  Given $\cG_{i+1}$, the conditional law of $S_i$ is uniform
on the copies of $F_i$ containing $S_{i+1}$. If $A_i$ meets $S_{i+1}$, then $A_i\subseteq B_i$ is impossible.
Otherwise \cref{lem:core} gives $\PP(A_i\subseteq B_i\mid\cG_{i+1})\le
a^{|A_i|}$.  At the last step, $\PP(A_{\ell}\subseteq B_{\ell})\le a^{|A_{\ell}|}$
because $\nu_{F_{\ell}}$ is $a$-spread.  Applying these conditional bounds in order for
$i=0,1,\ldots,\ell-1$, and then using the bound for the last block, proves the claim.
\end{proof}

\begin{proof}[Proof of \cref{thm:one-log-fractional}]
Let $A\subseteq E(K_n)$ be nonempty.  Sample the sequence of nested copies
$S_0\supset S_1\supset\cdots\supset S_{\ell}$ constructed above.  By
\cref{lem:flag-law}, $S_0$ is a uniform copy of $H$, so
$\nu_H(\angles A)=\PP(A\subseteq S_0)$.

If $A\subseteq S_0$, every edge of $A$ lies in exactly one block,
so there is a unique assignment $f:A\to\{0,1,\ldots,\ell\}$ with $f(e)=i$ if and
only if $e\in B_i$.  Conversely, for such an $f$ put
$A_i(f):=\{e\in A:f(e)=i\}$.  The events
$\{A_i(f)\subseteq B_i\text{ for }i=0,\ldots,\ell\}$ are pairwise disjoint over the
choices of $f$, and their union is exactly $\{A\subseteq S_0\}$.  By
\cref{lem:tagged} each has probability at most $a^{|A|}$, and there are $(\ell+1)^{|A|}$
assignments, so $\nu_H(\angles A)\le[(\ell+1)a]^{|A|}$.
Finally, \eqref{eq:chain-length} gives
$(\ell+1)a \le Cq_H\log(2e(H))$.
\end{proof}

\section{A coupling conjecture}\label{sec:coupling-conjecture}

The decomposition in \cref{sec:cores} also suggests a route
to Conjecture~B. Rather than combine the conditionally spread
blocks into a spread bound for $\nu_H$ and then apply
Theorem~C, we seek to absorb them successively into a product
measure. The coupling conjecture below would make this
possible at total density $O(q_H\log(2e(H)))$. 

For a finite set $X$, write $X_p$ for its $p$-random
subset, obtained by including each element independently with probability
$p$, and write $\mu_p$ for the law of $X_p$.  For laws $P,Q$ on
$2^X$, write $P\preceq_\delta Q$ if they admit a
coupling $(Y,W)$ with $\PP(Y\subseteq W)\geq1-\delta$.

\begin{conjecture}\label[conjecture]{conj:reserve-coupling}
There is an absolute constant $C$ such that, whenever $B$
is an $a$-spread random subset of $X$, $|B|\le r$ almost
surely with $r\ge1$, and $X_p$ is independent of $B$,
\[
\cL(B\cup X_p)\preceq_{1/4}\mu_{\min\{1,p+Ca\}}
\qquad\text{if }p\geq Ca\log(2r).
\]
\end{conjecture}

The scale of the increment is suggested by the marginal bound
$\PP(x\in B\cup X_p)=p+(1-p)\PP(x\in B)\le p+a$.
The logarithmic reserve is suggested by taking $B$ to choose one point
independently and uniformly from each of $r$ disjoint $m$-element
blocks, so $a=1/m$: a product set containing $B$ must hit every block,
which in the sparse regime costs density of order $a\log(2r)$.
The conjecture predicts that this logarithmic cost can be paid once
in the independent noise, after which absorbing the prescribed law of
$B$ costs only $O(a)$.  The coupling may rearrange complete noise
configurations.

\begin{remark}\label[remark]{rem:coupling-amplification}
Up to changing the absolute constant, \cref{conj:reserve-coupling} is
equivalent to the following stronger statement: under the
same hypotheses on $B$ and $X_p$, for every $0<\delta<1$,
\[
\cL(B\cup X_p)\preceq_\delta\mu_{\min\{1,p+Ca\}}
\qquad\text{if }p\geq Ca\log(2r/\delta).
\]
Indeed, let $C_\ast$ be the constant in the conjecture and put
$k=\lceil4\delta^{-2}\rceil$.  The union of $k$ independent copies of
$B$ on disjoint ground sets is $a$-spread and has size at most $kr$.
Thus, when $p\geq C_\ast a\log(2kr)$, the conjecture gives
$P^{\otimes k}\preceq_{1/4}Q^{\otimes k}$, where
$P=\cL(B\cup X_p)$ and $Q=\mu_{\min\{1,p+C_\ast a\}}$.
If an increasing family $\mathcal A$ had
$u-v>\delta$, where $u=P(\mathcal A)$ and $v=Q(\mathcal A)$,
then the increasing event that at least $k(u+v)/2$ coordinates belong
to $\mathcal A$ would have probability difference at least
$1-2\exp(-k(u-v)^2/2)>1-2\e^{-2}>1/4$, by Hoeffding's inequality \cite{Hoeffding63}.
This contradicts the coupling.  Hence $P(\mathcal A)\le
Q(\mathcal A)+\delta$ for every increasing $\mathcal A$, and the finite
approximate form of Strassen's coupling theorem \cite{Strassen65}
yields $P\preceq_\delta Q$.  Since $k\le5\delta^{-2}$ and
$\log(2kr)\le4\log(2r/\delta)$, the displayed statement follows with
$C=4C_\ast$.  Conversely, taking $\delta=1/4$ recovers the original
conjecture after changing the constant.
\end{remark}

\paragraph{Special cases.}
We have verified \cref{conj:reserve-coupling} for rank-one laws and
independent unions of rank-one laws on disjoint blocks, including the
model above.  More generally, it holds for laws with log-concave
multiaffine generating polynomials and for strongly Rayleigh laws.
These include positively weighted basis measures of arbitrary matroids
(hence weighted spanning trees and uniform fixed-size subsets), and
finite determinantal point processes.  In these cases an increment
$C_1a$ gives failure at most $\min\{1,C_0r\exp(-c_0p/a)\}$, for absolute
positive constants $C_0,C_1,c_0$.  We omit the proofs here.

\begin{remark}\label[remark]{rem:coupling-fractional-kk}
\cref{conj:reserve-coupling} directly implies the fractional
Kahn--Kalai theorem (Theorem C).  Indeed, fractional-cover duality gives a probability law
on the minimal members of $\cF$ that is $2q_f(\cF)$-spread: the dual at
$q_f(\cF)$ has mass $1/2$ and cylinder masses at most
$q_f(\cF)^{|A|}$, and normalization gives the assertion.  Apply
\cref{conj:reserve-coupling} with $a=2q_f(\cF)$ and
$r=\ell(\cF)$.  Since $B\cup X_p\in\cF$ surely, the target product
measure belongs to $\cF$ with probability at least $3/4$. If $Ca(\log(2r)+1)\ge1$, the desired bound is trivial. Otherwise, taking
$p=Ca\log(2r)$ gives
$\pc(\cF)\le C'q_f(\cF)\log(2\ell(\cF))$.
\end{remark}

The constant increment allows the conditionally spread blocks of
\cref{sec:cores} to be absorbed successively, giving a single logarithm
in the threshold bound.

\begin{proposition}\label[proposition]{prop:coupling-implies-kkc2}
\cref{conj:reserve-coupling} implies the ``second'' Kahn--Kalai conjecture.
\end{proposition}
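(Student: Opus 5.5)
The plan is to use the chain $H=F_0\supset F_1\supset\cdots\supset F_\ell$ of ratio-maximizing cores from \cref{sec:cores}, together with the nested sampling $S_\ell\subset\cdots\subset S_0$ and its blocks $B_i=S_i\setminus S_{i+1}$. Put $a=64\e\,q_H$ and $r=e(H)$, and fix independent $p$-random subsets $X^{(\ell)}_{p_\ell},\dots,X^{(0)}_{p_0}$ of $X=E(K_n)$. We absorb the blocks one at a time, starting from the last block $B_\ell=S_\ell$. Initially take $p=C a\log(8(\ell+1)r)$. By \cref{rem:coupling-amplification} applied with $\delta=1/(4(\ell+1))$, this $p$ is admissible, and every step will have failure probability at most $\delta$. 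Since $\ell\le\log_{64}e(H)$ by \eqref{eq:chain-length}, we have $\log(2r/\delta)=O(\log(2e(H)))$. Hence the reserve density is $O(q_H\log(2e(H)))$.

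The inductive step is as follows. Suppose we have coupled $S_{i+1}\cup X_{p_{i+1}}$ inside a product set $W_{i+1}\sim\mu_{p_{i+1}}$, where $p_{i+1}=p+(\ell-i)Ca$, with failure probability at most $(\ell-i)\delta$. Condition on $S_{i+1}$. By \cref{lem:core}, the block $B_i$ is conditionally $a$-spread with $|B_i|\le r$. Apply \cref{rem:coupling-amplification} conditionally to $B_i$ and a fresh copy of $X_p$ to couple $B_i\cup X_p$ into $\mu_{p+Ca}$. The union of this set with $W_{i+1}$ is then dominated by a product measure of density at most $p_{i+1}+p+Ca$.

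This is where the main obstacle lies, because the density would then grow by $p$ at every step, not by $Ca$. To avoid this, I would refine the step so that the noise is recycled. Write the current product set $W_{i+1}$ itself as $X_p\cup W'$ with $W'$ of density about $p_{i+1}-p$; this is possible since a product measure of density $p_{i+1}$ splits as a union of independent product measures. Then $B_i$ is absorbed into the $X_p$ part using the conjecture. This requires $X_p$ to be independent of $B_i$ given the history. To arrange that, the coupling at each step must leave a fresh independent $X_p$ portion available. Concretely, we keep a reserve $X_p$ untouched by earlier couplings and let each coupling output $\mu_{p+Ca}$, whose extra $Ca$ part joins $W'$.

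Here the conditional independence is delicate. The coupling may "rearrange complete noise configurations," so after one step the reserve is no longer independent of $S_i$. I would handle this by invoking the conjecture with target $\mu_{p+Ca}$ and then resplitting $\mu_{p+Ca}$ as $\mu_p\cup\mu_{Ca'}$. The new reserve copy of $\mu_p$ is taken to be exactly the output of the coupling. What must be checked is that the next block $B_{i-1}$, conditionally on $S_i$, is still $a$-spread and independent of that reserve. This uses the uniform-extension structure of \cref{lem:flag-law}: $B_{i-1}$ depends only on $S_i$, and we can generate it after the coupling by sampling conditionally on $S_i$ alone. The coupling involves only $(S_i,\text{noise})$ and the product output, so we choose the joint law so that the output is independent of $S_i$. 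This independence is exactly what must be extracted from the conjecture, perhaps by coupling conditionally on $S_{i+1}$ and mixing.

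After $\ell+1$ steps, $S_0\cup X_p$ lies in a set of law $\mu_{p+(\ell+1)Ca}$ with probability at least $3/4$. Since $S_0\sim\nu_H$ by \cref{lem:flag-law}, this gives $\pc(H)\le p+(\ell+1)Ca=O(q_H\log(2e(H)))$. Combined with \eqref{eq:q-versus-pe}, this is Conjecture B.
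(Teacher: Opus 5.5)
There is a genuine gap: the central step, reusing the noise with the right independence, is left as ``what must be checked'', and the route you sketch for it cannot work. Your parameters and endpoint do agree with the paper's ($p=Ca\log(8(\ell+1)r)$, $\delta=1/(4(\ell+1))$, final density $p+(\ell+1)Ca$).

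You absorb the blocks from the inside out, starting with $S_\ell$. Any product set that has absorbed $S_{i+1}$ contains it with high probability. It is therefore correlated with $S_{i+1}$, and hence with $B_i$, whose law depends on $S_{i+1}$. So the conjecture cannot be applied to $B_i$ with that set as the independent noise.

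Your repair is to choose the coupling so that its output is independent of $S_i$, and this is impossible. The output must contain $B_i\subseteq S_i$ with probability at least $1-\delta$. But a set of law $\mu_{p+Ca}$ independent of $S_i$ meets $S_i$ in a $\mathrm{Bin}(e(F_i),p+Ca)$ number of edges. When $p+Ca$ is small, it therefore rarely contains the more than $\frac{63}{64}e(F_i)$ edges of $B_i$. Mixing over $S_{i+1}$, as you suggest at the end, gives independence from $S_{i+1}$, not from $S_i$. In your order, that is not what the next block needs.

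The missing idea is to peel the flag from the outside in. Put $p_0=p$ and $p_i=p_0+iCa$, and take $X_{p_0}$ independent of the whole flag.

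\begin{itemize}
\item Conditionally on $S_1$, the block $B_0$ is $a$-spread by \cref{lem:core}, and $X_{p_0}$ is still an independent product set.
\item So \cref{rem:coupling-amplification} couples $B_0\cup X_{p_0}$ into a set $W$ of law $\mu_{p_1}$, with error $\delta$.
\item The target law is the same for every value of $S_1$. Building the coupling for each value and mixing therefore makes $W$ independent of $S_1$.
\item Since $S_0\cup X_{p_0}=S_1\cup(B_0\cup X_{p_0})\subseteq S_1\cup W$, this gives $\cL(S_0\cup X_{p_0})\preceq_\delta\cL(S_1\cup X_{p_1})$, with $X_{p_1}$ independent of $S_1$.
\end{itemize}

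This is exactly the situation needed to repeat the step for $B_1$ given $S_2$, and so on, ending with $S_\ell$ itself. The conjecture returns a single product law at density $p_i+Ca$, so the whole noise is carried forward with increment $Ca$. No separate reserve or resplitting is needed, and the density does not grow by $p$ per step. Composing the $\ell+1$ couplings, whose errors add to $1/4$, gives $\cL(S_0\cup X_{p_0})\preceq_{1/4}\mu_{p_0+(\ell+1)Ca}$.
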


\begin{proof}
Take $X=E(K_n)$ and put $R=e(H)$ and $a=64\e\,q_H$.
By \cref{lem:core,lem:flag-law},
there is a chain of uniform copies
$S_0\supseteq S_1\supseteq\cdots\supseteq S_{\ell}$, with $S_0$ a copy of
$H$, such that each block $B_i:=S_i\setminus S_{i+1}$, for $i<\ell$, is conditionally
$a$-spread given $S_{i+1}$, while the last block $B_{\ell}:=S_{\ell}$ is $a$-spread.  Set
$L=\ell+1\le1+\log_{64}R$, $p_0=Ca\log(8LR)$ and
$p_i=p_0+iCa$ for $0\le i\le L$, where $C$ is the constant
in \cref{rem:coupling-amplification}.
If $p_L\geq1$, the desired threshold bound is immediate, so assume
$p_L<1$.  For $i<\ell$, condition on $S_{i+1}$ and apply
\cref{rem:coupling-amplification} with $\delta=1/(4L)$ to $B_i$
and $X_{p_i}$, chosen independently of the flag.
Its target has
the same product law $\mu_{p_{i+1}}$ for every value of $S_{i+1}$,
and is therefore independent of $S_{i+1}$.  Adding back $S_{i+1}$
and averaging gives
\[
\cL\bigl(S_i\cup X_{p_i}\bigr)
\preceq_{1/(4L)}
\cL\bigl(S_{i+1}\cup X_{p_{i+1}}\bigr),
\]
with $X_{p_j}$ independent of $S_j$ in each marginal.
Finally absorb $S_{\ell}$
in the same way.  Couplings compose with their errors adding, so
$\cL(S_0\cup X_{p_0})\preceq_{1/4}\mu_{p_L}$.  Consequently
$G_{n,p_L}$ contains $H$ with probability at least $3/4$, and
\[
\pc(H)\le p_L\le C'a\log(2R)
\le C''\pe(H)\log(2e(H)),
\]
using $L=O(\log(2R))$ and $q_H\le2\pe(H)$.
\end{proof}

\paragraph{Relation to Talagrand's convexity program.}
Talagrand's discrete convexity conjecture asks whether, for some fixed
$m$, the sets not covered by $m$ members of a family of large
$\mu_p$-measure form a $p$-small family
\cite[Research Problem 13.3.2]{Talagrand21}.
His fractional relaxation replaces smallness by the nonexistence of a
spread law supported on these uncovered sets.  His projection approach
leads to a more explicit domination question
\cite[Sections 3--4, especially Problem 4.5]{TalagrandProblems}:
if, conditional on $B$, the set $B^{(1/2)}$ is obtained by retaining
each point independently with probability $1/2$, is
$\cL(B^{(1/2)})\preceq_0\mu_{\min\{1,Ka\}}$
for every $a$-spread $B$, with an absolute $K$?
Thus spread-to-product couplings already occur explicitly in his
proposed approaches.  Our conjecture keeps all of $B$ and adds
independent noise: after a reserve of order $a\log(2r)$, it asks for
approximate domination at an additional density cost of only $O(a)$.

More recently, Li \cite[Theorem 3.3]{Li26} proved that every $a$-spread
law admits a coupling $B\subseteq R_1\cup R_2$ almost surely with
$R_1,R_2\sim\mu_a$, resolving the fractional discrete convexity
problem.
The two product samples may be dependent, so their union need not have
a product law.  This does not supply the independent reserve and the
single product target with an additive $O(a)$ increment required by
\cref{conj:reserve-coupling}.  The precise relationship with the full
discrete convexity conjecture remains to be determined.

\section{Packings and the proof of Theorem~\secref{thm:density}}\label{sec:density}

Throughout this section $H\subseteq K_n$ is nonempty,
$\Delta=\Delta(H)$ and $d_*=d_*(H)$.  By
\cref{lem:cylinder-identity}, we need to bound $\Inj(J,H)$ for every
subgraph $J\subseteq H$.  We first handle connected $J$ and then multiply
the estimates over components.

For a connected $J$ with $v$ vertices, counting along a spanning tree gives at most $n\Delta^{v-1}$ embeddings.  A maximum packing of $b$ vertex-disjoint
copies provides a second starting point: its $bv$ vertices meet every
copy of $J$, so an embedding can be rooted at one of these vertices.
This gives $bv^2\Delta^{v-1}$.  The packing bound controls $b$ in terms of
$q_H$, and taking the better of the two embedding bounds will give the
factor $\Delta^{2/d_*}$.

The packing observation (see \cref{lem:packing-bound} below) is due to Dubroff, Kahn, and Park
\cite[Claim~5.1]{DKP}, where it is used for trees.  Their companion paper
\cite{DKP2} also uses packings to organize local copy counts.  Here the
spanning-tree estimate applies to every connected witness, so the same
argument works for all $J$.

\begin{lemma}\label[lemma]{lem:embed-two-bounds}
Let $J\subseteq H$ be connected and nonempty.
Let $b_H(J)$ be the largest number of pairwise vertex-disjoint copies of $J$ in $H$.
Then
\[
\Inj(J,H)\le n\Delta^{v(J)-1}
\qquad\text{and}\qquad
\Inj(J,H)\le b_H(J)v(J)^2\Delta^{v(J)-1}.
\]
\end{lemma}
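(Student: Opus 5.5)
Both bounds come from counting embeddings vertex by vertex along a spanning tree of $J$. Fix a spanning tree $T$ of $J$, which exists because $J$ is connected, and fix a root $x\in V(J)$. Order $V(J)$ as $x=x_1,x_2,\dots,x_v$ with $v=v(J)$, so that each $x_k$ with $k\ge2$ has its $T$-parent among $x_1,\dots,x_{k-1}$. Any injective edge-preserving map $\phi\colon V(J)\to V(H)$ is determined by $\phi(x_1)$ together with the successive choices of $\phi(x_k)$. Since $x_k$ is adjacent in $J$ to its parent $x_j$, the image $\phi(x_k)$ must be an $H$-neighbour of the already chosen $\phi(x_j)$, which leaves at most $\Delta$ options. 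Dropping all remaining constraints (other edges of $J$, injectivity) can only enlarge the count. Hence, for any set $R\subseteq V(H)$,
\[
\#\{\phi\in\Inj(J,H):\phi(x)\in R\}\le |R|\,\Delta^{v-1}.
\]
Taking $R=V(H)$ and using $|V(H)|\le n$ gives the first bound $\Inj(J,H)\le n\Delta^{v(J)-1}$.

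For the second bound, take a maximum packing $Q_1,\dots,Q_b$ of pairwise vertex-disjoint copies of $J$ in $H$, where $b=b_H(J)$, and let $W=\bigcup_i V(Q_i)$. Then $|W|=bv$. By maximality, every copy of $J$ in $H$ shares a vertex with $W$; otherwise it could be added to the packing. So every $\phi\in\Inj(J,H)$ sends at least one vertex $y\in V(J)$ into $W$. Since the count above holds for an arbitrary choice of root, we root at $y$ and apply it with $R=W$. A union bound over the $v$ choices of $y$ then gives
\[
\Inj(J,H)\le\sum_{y\in V(J)}\#\{\phi:\phi(y)\in W\}\le v\cdot bv\cdot\Delta^{v-1}=b\,v^2\Delta^{v-1}.
\]

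No step here is a real obstacle. The only point needing care is that the vertex landing in $W$ depends on $\phi$, which is why the union bound over roots $y$ is needed; this step is exactly what produces the extra factor $v(J)$ on top of $|W|=b\,v(J)$. We also use that the tree count is valid for every choice of root, which holds because any vertex of a tree can serve as its root.
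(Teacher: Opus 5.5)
Your proposal is correct and follows the paper's proof essentially step for step. Both bound embeddings vertex by vertex along a spanning tree of $J$ to get the first estimate, and both use a maximum vertex-disjoint packing whose $b\,v(J)$ vertices meet every copy of $J$. A union bound over which vertex of $J$ lands in this set, followed by the spanning-tree count rooted there, then gives the second estimate.
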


\begin{proof}
Put $v=v(J)$ and $b=b_H(J)$. Fix a spanning tree of $J$ and an ordering $x_1,\ldots,x_v$ of $V(J)$ in which
every $x_k$ with $k\geq2$ has a neighbor in the spanning tree that appears earlier in the order.  An
embedding is determined by the images in this order.  There are at most $n$
choices for the image of $x_1$, and once the image of a tree neighbor of $x_k$
is fixed, there are at most $\Delta$ choices for the image of $x_k$.  This gives
the first bound.

For the second, fix a maximum vertex-disjoint collection of $b$ copies of $J$ in
$H$ and let $X$ be the set of its $bv$ vertices.  By maximality, every copy of
$J$ in $H$ meets $X$.  Thus every embedding of $J$ maps some vertex of $J$ into
$X$.  Choose that vertex of $J$ in at most $v$ ways, choose its image in at most
$bv$ ways, and start the spanning-tree count at that vertex.
This gives at most $v\cdot bv\cdot\Delta^{v-1}$ embeddings.
\end{proof}

\begin{lemma}[{\cite[Claim~5.1]{DKP}}]\label[lemma]{lem:packing-bound}
Let $J\subseteq H$ be connected and nonempty, and let $b_H(J)$
be the maximum number of pairwise vertex-disjoint copies of $J$
in $H$. Then
\[
b_H(J)\le\frac{\e n^{v(J)}q_H^{e(J)}}{\Aut(J)}.
\]
\end{lemma}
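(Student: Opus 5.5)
The idea is to apply the definition of $q_H$ to the subgraph $I:=b\,J$, the disjoint union of $b=b_H(J)$ copies of $J$, which is a nonempty subgraph of $H$ by the definition of $b_H(J)$. By definition of $q_H$ we have $N_I(K_n)^{-1/e(I)}\le q_H$, that is,
\[
N_I(K_n)\ge q_H^{-b\,e(J)}.
\]
Next we compute $N_I(K_n)$ explicitly. Since $J$ is connected, the automorphisms of $bJ$ are exactly permutations of the $b$ components composed with automorphisms of each component, so $\Aut(bJ)=b!\,\Aut(J)^b$. Also $\Inj(bJ,K_n)=(n)_{bv}\le n^{bv}$ with $v=v(J)$. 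Hence
\[
q_H^{-b\,e(J)}\le N_I(K_n)=\frac{(n)_{bv}}{b!\,\Aut(J)^b}\le\frac{n^{bv}}{b!\,\Aut(J)^b}.
\]
Connectedness is used here: for disconnected $J$, the automorphism group of $bJ$ could be larger, but that would only help, and the statement assumes connectivity anyway.

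Rearranging gives
\[
b!\le\left(\frac{n^{v}q_H^{e(J)}}{\Aut(J)}\right)^{b}.
\]
Using $b!\ge(b/\e)^b$ and taking $b$-th roots yields
\[
b/\e\le \frac{n^vq_H^{e(J)}}{\Aut(J)},
\]
which is the claim. The case $b=0$ cannot occur since $J\subseteq H$, and in any event it would be trivial.

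The only delicate point is the automorphism count $\Aut(bJ)=b!\Aut(J)^b$; the argument needs only the lower bound $\Aut(bJ)\ge b!\Aut(J)^b$. That lower bound holds for any $J$, since permuting identical components and applying an automorphism inside each one gives distinct automorphisms of $bJ$. So the proof goes through cleanly, and the rest is the Stirling-type bound $b!\ge(b/\e)^b$.
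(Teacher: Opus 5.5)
Your proof is correct and follows the paper's argument: apply the definition of $q_H$ to the subgraph $bJ$, write $N_{bJ}(K_n)=(n)_{bv}/(b!\,\Aut(J)^b)$, and finish with $(n)_{bv}\le n^{bv}$, $b!\ge(b/\e)^b$ and $b$-th roots. Your remark that only the inequality $\Aut(bJ)\ge b!\,\Aut(J)^b$ is needed is a valid minor refinement. The paper uses connectedness to get equality instead.
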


\begin{proof}
We repeat the argument of \cite[Claim~5.1]{DKP}.
Write $v=v(J)$, $s=e(J)$ and $b=b_H(J)$.
Since the disjoint union $bJ$ is a subgraph of $H$,
the definition of $q_H$ gives $N_{bJ}(K_n)q_H^{bs}\ge1$.
As $J$ is connected, $\Aut(bJ)=\Aut(J)^b b!$. Hence
\[
1\le N_{bJ}(K_n)q_H^{bs}
=\frac{(n)_{bv}q_H^{bs}}{\Aut(J)^b b!}
\le
\left(\frac{\e n^v q_H^s}{b\,\Aut(J)}\right)^b,
\]
where we used $(n)_{bv}\le n^{bv}$ and $b!\ge(b/\e)^b$.
Taking $b$-th roots and rearranging proves the bound.
\end{proof}

We now compare the two embedding estimates.  The densest subgraph of $H$
ensures that $q_H\geq n^{-2/d_*}$; this is the scale at which the two
estimates can be combined.

\begin{lemma}\label[lemma]{lem:connected-density}
There is an absolute constant $C$ such that every connected nonempty
$J\subseteq H$ satisfies
\[
\Inj(J,H)\le n^{v(J)}\bigl(C\Delta^{2/d_*}q_H\bigr)^{e(J)}.
\]
\end{lemma}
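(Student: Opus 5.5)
The plan is to combine the two estimates of \cref{lem:embed-two-bounds} with two lower bounds on $q_H$, and then interpolate between them. Write $v=v(J)$ and $s=e(J)$. Since $J$ is connected, $s\ge v-1$. Also $s/v\le d_*/2$ by the definition of $d_*$.

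\emph{Step 1: lower bounds on $q_H$.} Let $I\subseteq H$ attain $d_*$. Then $N_I(K_n)\le n^{v(I)}$, so
\[
q_H\ge n^{-v(I)/e(I)}=n^{-2/d_*}.
\]
A vertex of degree $\Delta$ gives a star $K_{1,\Delta}\subseteq H$. Since $N_{K_{1,\Delta}}(K_n)\le n^{\Delta+1}/\Delta!$, we get
\[
q_H\ge (\Delta!)^{1/\Delta}n^{-1-1/\Delta}\ge \frac{\Delta}{\e n}\,n^{-1/\Delta}.
\]
The point is that $nq_H$ is at least roughly $\Delta$, which lets $q_H$ pay for the factor $\Delta^{v-1}$ in the tree counts.

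\emph{Step 2: the packing estimate.} Feed \cref{lem:packing-bound} into the second bound of \cref{lem:embed-two-bounds}, and use $v^2\le 4^{s}$ and $\Aut(J)\ge1$:
\[
\Inj(J,H)\le \e\,v^2\,\Delta^{v-1}\,n^{v}q_H^{s}\le n^v(4\e q_H)^s\,\Delta^{v-1}.
\]
Compared with the target, this is off by the factor $\Delta^{v-1-2s/d_*}$. So it already suffices whenever $v-1\le 2s/d_*$, which holds for instance when $d_*\le 2$ (then $2s/d_*\ge s\ge v-1$).

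\emph{Step 3: the trivial estimate.} The first bound, $\Inj(J,H)\le n\Delta^{v-1}$, compared with the target $n^v(C\Delta^{2/d_*}q_H)^s$, requires
\[
(nq_H)^{-(v-1)}q_H^{-(s-v+1)}\Delta^{v-1}\le C^s\Delta^{2s/d_*}.
\]
By Step 1, $(nq_H/\Delta)^{v-1}$ is at least about $n^{-(v-1)/\Delta}$. The excess factor $q_H^{-(s-v+1)}$ is at most $n^{2(s-v+1)/d_*}$.

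\emph{Step 4: interpolation.} In general I would take a weighted geometric mean
\[
\Inj(J,H)\le (n\Delta^{v-1})^{1-\theta}\bigl(n^v(4\e q_H)^s\Delta^{v-1}\bigr)^{\theta}.
\]
The weight $\theta$ is chosen so that the leftover power of $n$ is exactly absorbed by $q_H^{(1-\theta)s}\ge n^{-2(1-\theta)s/d_*}$. This fixes
\[
1-\theta=\frac{v-1}{v-2s/d_*}\,\cdot\,(\text{correction}),
\]
and then the leftover power of $\Delta$ is $\Delta^{\theta(v-1)\cdots}$. One then checks, using $s\ge v-1$ and $s/v\le d_*/2$, that this power of $\Delta$ is at most $\Delta^{2s/d_*}$ up to $C^s$.

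The main obstacle is the exponent bookkeeping in Step 4, specifically showing the $\Delta$-exponent is at most $2s/d_*$ uniformly in $v$, $s$ and $d_*$. If the clean interpolation fails, I would instead split into cases. When $n\le\Delta^{C'}$, the bound $q_H\ge n^{-2/d_*}$ converts powers of $n$ into powers of $\Delta^{2/d_*}$, and Step 3 suffices. When $n>\Delta^{C'}$, the factor $n^{-1/\Delta}$ from the star bound and the packing estimate are compared directly. The $n^{-1/\Delta}$ loss for small $\Delta$ also needs care; there I would use that $\Delta^{2/d_*}$ is then bounded and fall back on Step 2.
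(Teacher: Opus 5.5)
Your Step 2 is correct and matches the paper's first case: the packing estimate gives $\Inj(J,H)\le n^v(4\e q_H)^s\Delta^{v-1}$, which suffices when $v-1\le 2s/d_*$. The gap is the complementary regime $v-1>2s/d_*$, where Steps 3--4 do not give a proof.

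\emph{Step 4 cannot work as described.} Both estimates carry exactly the factor $\Delta^{v-1}$, so every weighted geometric mean of them still carries $\Delta^{v-1}$, not ``$\Delta^{\theta(v-1)\cdots}$''. The excess $\Delta^{v-1-2s/d_*}$ can only be removed by trading powers of $\Delta$ for powers of $n$, that is, by using $\Delta\le n$. You never invoke this.

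\emph{The star lower bound does not substitute for it.} It leaves uncompensated positive powers of $n$: a factor $n^{(v-1)/\Delta}$, which is unbounded when $\Delta$ is small and $n$ is large, and also $n^{2(s-v+1)/d_*}$ when $J$ has cycles.

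\emph{The fallback fails too.} If $n\le\Delta^{C'}$, bounding $q_H^{-s}\le n^{2s/d_*}\le\Delta^{2C's/d_*}$ overshoots the target exponent $2s/d_*$ by the factor $C'$. This cannot be absorbed into $C^s$.

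The fix is one line. In Step 3, do not split $n^{-(v-1)}q_H^{-s}$ as $(nq_H)^{-(v-1)}q_H^{-(s-v+1)}$. Instead, when $v-1\ge 2s/d_*$, first use $\Delta\le n$ to lower the exponent, then use your Step 1 bound $q_H\ge n^{-2/d_*}$:
\[
n\Delta^{v-1}=n^v\Bigl(\frac{\Delta}{n}\Bigr)^{v-1}\le n^v\Bigl(\frac{\Delta}{n}\Bigr)^{2s/d_*}\le n^v\Delta^{2s/d_*}q_H^{s}.
\]
Combined with your Step 2, this is exactly the paper's argument. The paper compares $x=(v-1)/s$ with $2/d_*$, using the packing estimate when $x\le 2/d_*$ and the spanning-tree estimate when $x\ge 2/d_*$. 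No interpolation and no star bound are needed.
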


\begin{proof}
Write $v=v(J)$, $s=e(J)$ and $x=(v-1)/s$.  Since $J$ is connected and has an
edge, $v\geq2$ and $s\geq v-1$, so $0<x\le1$.  The first bound of
\cref{lem:embed-two-bounds} gives
\[
\left(\frac{\Inj(J,H)}{n^v}\right)^{1/s}
\le
\left(\frac{\Delta^{v-1}}{n^{v-1}}\right)^{1/s}
=
\Delta^xn^{-x},
\]
while the second bound together with \cref{lem:packing-bound} gives
\[
\left(\frac{\Inj(J,H)}{n^v}\right)^{1/s}
\le
\left(\frac{\e v^2\Delta^{v-1}q_H^s}{\Aut(J)}\right)^{1/s}
\le
(\e v^2)^{1/s}\Delta^xq_H
\le
4\e\,\Delta^xq_H,
\]
as
$(\e v^2)^{1/s}\le(\e(s+1)^2)^{1/s}\le4\e$.  Combining the two,
\begin{equation}\label{eq:interpolation}
\left(\frac{\Inj(J,H)}{n^v}\right)^{1/s}
\le
4\e\,\Delta^x\min\{n^{-x},q_H\}.
\end{equation}

Let $J_*$ achieve $d_*$, and put $v_*=v(J_*)$ and $s_*=e(J_*)$. It follows from the definition
of $q_H$ that $q_H\geq N_{J_*}(K_n)^{-1/s_*}\geq n^{-v_*/s_*}=n^{-2/d_*}$.  If
$x\le2/d_*$, then $\Delta^x\le\Delta^{2/d_*}$ and we use the $q_H$ term in
\eqref{eq:interpolation}.  If $x\geq2/d_*$, then $\Delta\le n$ gives
\[
\Delta^xn^{-x}=\left(\frac\Delta n\right)^x
\le\left(\frac\Delta n\right)^{2/d_*}
\le\Delta^{2/d_*}q_H,
\]
and we use the $n^{-x}$ term.  In both cases the right-hand side of
\eqref{eq:interpolation} is at most $4\e\,\Delta^{2/d_*}q_H$.
\end{proof}

\begin{proof}[Proof of \cref{thm:density}]
Let $A\subseteq E(K_n)$ be nonempty, let $J$ be its incident graph and put
$v=v(J)$, so that $e(J)=|A|$ and $v\le2|A|$.  We may assume $\nu_H(\angles A)>0$, so $J$ is isomorphic
to a subgraph of $H$.  By \cref{lem:cylinder-identity},
\[
\nu_H(\angles A)=\frac{\Inj(J,H)}{(n)_v}.
\]
Let $J_1,\ldots,J_k$ be the components of $J$, each of which is connected,
nonempty and a subgraph of $H$.  An embedding of $J$ into $H$ gives an embedding
of each $J_i$.  These component embeddings determine the full embedding, so
$\Inj(J,H)\le\prod_{i=1}^k\Inj(J_i,H)$.  Multiplying the bounds of
\cref{lem:connected-density} over the components gives
\[
\Inj(J,H)\le n^v\bigl(C\Delta^{2/d_*}q_H\bigr)^{|A|}.
\]
As $(n)_v\geq(n/\e)^v$, we obtain
\[
\nu_H(\angles A)
\le
\e^v\bigl(C\Delta^{2/d_*}q_H\bigr)^{|A|}
\le
\bigl(C\e^2\Delta^{2/d_*}q_H\bigr)^{|A|},
\]
using $v\le2|A|$.
\end{proof}

\section{Rooted subtree sums and the proof of Theorem~\secref{thm:count}}\label{sec:trees}

Throughout this section $H\subseteq K_n$ is a tree with at least one edge. We deduce \cref{thm:count} from the following embedding bound,
whose proof occupies the remainder of the section.

\begin{theorem}\label{thm:tree-bound}
There is an absolute constant $C$ such that, for any trees
$H\subseteq K_n$ and $F$, each with at least one edge,
\[
\Inj(F,H)\le n^{v(F)}(Cq_H)^{e(F)}.
\]
\end{theorem}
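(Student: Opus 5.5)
The plan is to bound $\Inj(F,H)$ by grouping embeddings according to the degree profile of their images, in the spirit of \cref{lem:embed-two-bounds} and \cref{lem:connected-density}, but with the single maximum degree $\Delta$ replaced by vertex-dependent degrees. Write $q=q_H$, root $F$ at a vertex $r$, and for each $x\in V(F)\setminus\{r\}$ let $p(x)$ be its parent. The naive count gives
\[
\Inj(F,H)\le\sum_{\varphi}1\le\sum_{u\in V(H)}\ \prod_{x\ne r}\deg_H(\varphi(p(x))),
\]
which is too crude when $H$ has vertices of very large degree. I will therefore assign to each embedding $\varphi$ its \emph{profile} $(D_x)_{x\in V(F)}$, where $D_x$ is the dyadic class of $\deg_H(\varphi(x))$. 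I will then write $\Inj(F,H)$ as a rooted subtree sum over profiles.

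The first step compares this sum with its largest weighted term. I will attach to each profile a weight such as $\prod_x 2^{-\epsilon\log_2 D_x}$, or a similar geometric penalty in the class index. The total weight is then at most $C^{v(F)}$, so
\[
\Inj(F,H)\le C^{e(F)}\max_{\text{profiles}}\bigl(\text{weight}^{-1}\cdot\#\{\varphi\text{ with that profile}\}\bigr).
\]
This reduces the problem to bounding the number of embeddings with a fixed profile $(D_x)$, with a small loss $D_x^{\epsilon}$ that must be absorbed.

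The second step handles a fixed profile using a packing argument, as in \cref{lem:packing-bound}. Let $J$ be the tree obtained from $F$ by attaching to each $x$ a star of $\min\{D_x/2,\ \cdot\}$ fresh leaves, beyond the children already present. Every embedding with this profile extends to a copy of $J$ in $H$, since $\deg_H(\varphi(x))\ge D_x$ and $H$ is a tree, so the extra neighbours can be chosen distinct. As in \cref{lem:embed-two-bounds}, a maximum vertex-disjoint packing of copies of $J$ meets every copy. Hence the number of embeddings with this profile is at most
\[
b_H(J)\,v(J)^{2}\prod_{x\ne r}\min\{D_{p(x)},\cdot\}.
\]
By \cref{lem:packing-bound}, $b_H(J)\le \e\, n^{v(J)}q^{e(J)}/\Aut(J)$. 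Since $J$ is a tree, $v(J)=e(J)+1$. The stars contribute $\prod_x (qn)^{D_x/2}/(D_x/2)!\approx\prod_x (2\e qn/D_x)^{D_x/2}$ to this bound, through the automorphisms of the leaf stars. Meanwhile, each vertex $x$ with $k_x\le D_x$ children contributes at most $D_x^{k_x}$ choices.

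Combining the two, the per-vertex factor is $D_x^{k_x}(Cqn/D_x)^{D_x/2}$, up to the $n^{-1}$ bookkeeping. This is at most $(Cqn)^{k_x}$ in both regimes. If $D_x\le Cqn$, this is immediate. If $D_x>Cqn$, the surplus factor $(Cqn/D_x)^{D_x/2-k_x}$ is at most $1$ once $k_x\le D_x/2$. The case $D_x/2<k_x\le D_x$ is handled by counting instead via the trivial bound $D_x^{k_x}\le (2k_x)^{k_x}$ together with $N_{K_{1,k_x}}(K_n)q^{k_x}\ge1$. This latter inequality gives $k_x\le \e qn\cdot n^{1/k_x}$, which must be combined with the packing of the star itself. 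Multiplying over $x$ and using $v(J)^2\le C^{e(J)}$, the factors of $n$ and $q$ from the stars cancel. This leaves $n^{v(F)}(Cq)^{e(F)}$.

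I expect the main obstacle to be the interaction between the profile and the packing: the stars attached to different vertices, together with the $v(J)^2$ loss and the $n^{1/k}$ slack in the star bound, must all be charged to $(Cq)^{e(F)}$ rather than to $e(J)$. In particular, the regime where a low-degree vertex of $F$ maps to a vertex of huge degree is the delicate one. If the direct argument leaks a factor there, my first fallback is to truncate the attached stars at size $\min\{D_x,\,k_x+Cqn\}$. I would then average the maximizing profile over the packing copies, so that heavy vertices are counted only through their scarcity in $H$ rather than locally.
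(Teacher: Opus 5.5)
\textbf{There is a genuine gap: the per-profile count in your second step cannot give the theorem, and the problem is structural, not a matter of constants.}

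Write $s_x$ for the number of leaves attached at $x$. Since $n^{v(J)}q_H^{e(J)}=n^{v(F)}q_H^{e(F)}\prod_x(nq_H)^{s_x}$ and $\Aut(J)\ge\prod_x s_x!$, the bound of \cref{lem:packing-bound} on $b_H(J)$ already pays the whole target $n^{v(F)}q_H^{e(F)}$ for the edges of $F$. You then multiply by $v(J)^2$ and by the extension count $\prod_x(2D_x)^{k_x}$. So you actually need $\prod_x D_x^{k_x}(nq_H)^{s_x}/s_x!\le C^{e(F)}$, a factor $C^{k_x}$ per vertex. The factor $(Cqn)^{k_x}$ you aim for leaves a surplus of $(nq_H)^{e(F)}$.

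The correct requirement is false. Let $H$ be a complete $d$-ary tree of depth at least $2$ on $n$ vertices. Taking $I=H$ with $\Aut(H)=(d!)^{(n-1)/d}$ gives $nq_H\ge(1-o(1))d$, while every degree is at most $d+1$. Consider an embedding of $F$ into the internal vertices. There $D_x\asymp d$ and every admissible $s_x$ is at most $d+1$, so $(nq_H)^{s_x}/s_x!\ge1-o(1)$. No choice of stars helps, and your bound exceeds $n^{v(F)}(Cq_H)^{e(F)}$ by a factor of order $(d/C)^{e(F)}$, which is unbounded.

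This is exactly the $\Delta$-per-edge loss of \cref{lem:connected-density} for trees, where $(v-1)/s=1$. Multiplying a packing bound for one graph by a local extension count charges every edge of $F$ twice. Attached stars only help at vertices with $D_x\gg nq_H$. Your remark that the case $D_x\le Cqn$ is immediate is also wrong. There $D_x^{k_x}(Cqn/D_x)^{D_x/2}=(Cqn)^{k_x}(Cqn/D_x)^{D_x/2-k_x}$, which is exponentially large in $qn$ for a leaf $x$ of $F$ mapped to a vertex of degree about $qn$.

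In the light regime the trivial count $n\prod_x(2D_x)^{k_x}$ does work. But a single embedding can mix light vertices with heavy ones, and at the heavy ones only the scarcity of high-degree vertices helps. The freedom in the starting vertex can be spent only once, so neither count handles such a profile, and your fallback does not say how to combine them.

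Several other steps are also unjustified:
\begin{itemize}
\item The packing set may be met at an attached leaf. Its $H$-degree is not controlled by the profile, so the count $b_H(J)v(J)^2\prod D$ does not follow.
\item The bound $v(J)^2\le C^{e(J)}$ is charged to $e(J)$, which can greatly exceed $e(F)$.
\item The case $k_x>D_x/2$, with its $n^{1/k_x}$ slack, is left open.
\item The profile penalty $D_x^{\epsilon}$ still has to be absorbed.
\end{itemize}

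The missing idea is to use packings only to count roots, never multiplied by extension counts. The paper bounds $\sum_F\Inj(F,H)x^{e(F)}$ at $x\asymp1/(nq_H)$ by rooted subtree sums over the descendant trees $T_u$. It compares each such sum with its largest term at a constant-factor larger scale (\cref{lem:scalar}); this absorbs all local branching. It then chooses one maximizer per vertex $u$. For each rooted type $P$ with $j$ edges, it bounds the number of $u$ whose maximizer has type $P$ by splitting those maximizers into $j+1$ vertex-disjoint families (\cref{lem:disjoint,lem:type-count}). The weight $\Autr(P)(32nq_H)^{-j}$ then cancels the packing bound.
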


\begin{proof}[Proof of \cref{thm:count} from \cref{thm:tree-bound}]
Let $A\subseteq E(K_n)$ be nonempty and let $J$ be its incident graph.
If $H$ contains no copy of $J$, then $\nu_H(\angles A)=0$.
Otherwise $J$ is a forest without isolated vertices. Restricting an
embedding to its components and applying \cref{thm:tree-bound} gives
$\Inj(J,H)\le n^{v(J)}(Cq_H)^{e(J)}$. By
\cref{lem:cylinder-identity},
\[
\nu_H(\angles A)
\le\frac{n^{v(J)}}{(n)_{v(J)}}(Cq_H)^{e(J)}
\le(C\e^2q_H)^{e(J)},
\]
where $v(J)\le2e(J)$.
\end{proof}

\subsection{Reduction to two estimates}\label{subsec:tree-reduction}

It remains to prove \cref{thm:tree-bound}. Since $v(F)=e(F)+1$,
the desired bound takes the form
$\Inj(F,H)\le n(Cnq_H)^{e(F)}$.
We establish these bounds simultaneously by showing that
\[
\sum_F \Inj(F,H)x^{e(F)}\le C_0n
\qquad\text{for }x=\frac{c}{nq_H},
\]
where $c,C_0>0$ are absolute constants and the sum runs over
isomorphism types of trees with at least one edge.
To estimate this sum, we pass to rooted subtree sums,
which allow us to exploit the recursive structure of trees.

Let $\Injr(S,T)$ count the injective homomorphisms between rooted trees
that send root to root, and put $\Autr(S):=\Injr(S,S)$.
A \emph{rooted subtree} $R\subseteq T$ is a connected subgraph containing
the root of $T$, with that same root. We also allow the one-vertex subtree.
For a rooted tree $T$, define
\begin{equation}\label{eq:coefficients}
W_T(x):=\sum_S\Injr(S,T)x^{e(S)}
       =\sum_{R\subseteq T}\Autr(R)x^{e(R)},
\end{equation}
where the first sum is over rooted isomorphism types and the second
over rooted subtrees of $T$. The equality holds because each rooted
subtree of type $S$ is the image of exactly $\Autr(S)$ root-preserving
embeddings of $S$ into $T$. 

For each tree $F$ in the sum, choose a root $s_F$, and write
$(H,o)$ for the whole tree $H$ rooted at $o$. Summing over
the possible images of $s_F$ gives, for every $x\ge0$,
\begin{equation}\label{eq:extract}
\sum_F \Inj(F,H)x^{e(F)}
=\sum_{o\in V(H)}\sum_F
  \Injr\bigl((F,s_F),(H,o)\bigr)x^{e(F)}
\le\sum_{o\in V(H)}W_{(H,o)}(x).
\end{equation}
It therefore suffices
to bound the last sum by $O(n)$ at the
chosen scale. The two propositions below do this by comparing
subtree sums with their largest terms and then averaging
those largest terms.

Define the largest term in the subtree expansion of $W_T(x)$ by
\[
\Lambda_T(x):=\max_{R\subseteq T}\Autr(R)x^{e(R)}.
\]
The singleton gives $\Lambda_T(x)\geq1$.

Fix an arbitrary reference root $r\in V(H)$. For each $u\in V(H)$, let $T_u$
be the tree spanned by $u$ and its descendants, rooted at $u$.
We retain this rooting throughout the remainder of the section.
These descendant trees differ from $(H,o)$, which denotes the whole
tree $H$ rooted at $o$.

The first estimate bounds the sum over all rootings of $H$
using descendant trees from any one fixed rooting.

\begin{proposition}\label[proposition]{prop:tree-comparison}
Let $H$ be a tree and fix $r\in V(H)$. For each $u\in V(H)$,
let $T_u$ be the tree spanned by $u$ and its descendants
in $(H,r)$, rooted at $u$. Then, for every $x\geq0$,
\[
\sum_{o\in V(H)}W_{(H,o)}(x)
\le4\sum_{u\in V(H)}\Lambda_{T_u}(1024x).
\]
\end{proposition}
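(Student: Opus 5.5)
The proof splits into two steps. First, transfer the sum over all rootings $o$ to the descendant trees $T_u$ of the fixed rooting $(H,r)$. Second, compare each subtree sum $W_{T_u}$ with its largest term $\Lambda_{T_u}$ at a dilated argument. The constant $1024$ and the prefactor $4$ come from several geometric losses of the form $K^{e(R)}$ absorbed into the argument.

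\emph{Step 1 (rerooting).} I would rewrite the left side via \eqref{eq:coefficients} as a sum over pairs $(o,R)$, where $R$ is a subtree of $H$ containing $o$, with weight $\Autr_o(R)x^{e(R)}$. Here $\Autr_o$ denotes the automorphisms of $R$ fixing $o$. Each subtree $R$ of $H$ has a unique vertex $u$ closest to $r$. Then $R\subseteq T_u$, and $R$ is a rooted subtree of $T_u$ with root $u$. Conversely, every rooted subtree of some $T_u$ arises exactly once in this way. Thus
\[
\sum_o W_{(H,o)}(x)=\sum_u\sum_{R\subseteq T_u}\Bigl(\sum_{o\in V(R)}\Autr_o(R)\Bigr)x^{e(R)}.
\]
By orbit--stabilizer, $\Autr_o(R)\le\Aut(R)\le v(R)\Autr_u(R)$. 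Hence the inner sum is at most $v(R)^2\Autr_u(R)\le 4^{e(R)}\Autr_u(R)$, which also holds in the trivial case $e(R)=0$. This gives $\sum_oW_{(H,o)}(x)\le\sum_uW_{T_u}(4x)$.

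\emph{Step 2 (sum versus largest term).} It then suffices to prove, for every rooted tree $T$ and every $y\ge0$, that
\[
W_T(y)\le 4\,\Lambda_T(256y).
\]
Taking $y=4x$ gives the proposition, since $256\cdot4=1024$. I would prove a slightly stronger inductive statement, by induction on $v(T)$, so that losses do not compound across levels. The natural form is
\[
W_T(y)\le \Lambda_T(Ky)\quad\text{up to a factor absorbed into the dilation,}
\]
with the factor $4$ paid only once, at the top.

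The recursion is as follows. Group the children of the root of $T$ into classes of isomorphic branches. A rooted subtree selects a set of children and a rooted subtree below each selected child. So $W_T(y)=\prod_c\bigl(1+yW_{T_c}(y)\bigr)$, where the product runs over the children $c$ of the root. On the $\Lambda$ side, let $S$ be a maximizer for $\Lambda_{T_c}$. For a class of $m$ identical children, one may take $k$ of them, each carrying a copy of $S$. This contributes the factor $k!\,\Autr(S)^k(Ky)^{k(e(S)+1)}$, because the root automorphism group may permute the $k$ identical branches.

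\emph{Main obstacle.} The crux is the per-class inequality. For one class of $m$ identical children, with each branch value $yW$ bounded inductively by $y\Lambda_c$, the expansion gives
\[
(1+yW)^m\le\sum_k\binom mk(y\Lambda_c)^k .
\]
This must be bounded by $\max_k k!\,(K'y\Lambda_c)^k$, up to a constant. I would write $z=my\Lambda_c$, so the sum is at most $\sum_k z^k/k!\le e^{z}$. The largest-term side, restricted to $k\le m$, is at least $\max_{k\le m}k!\,(K'z/m)^k$. With $k\approx\min\{m,z\}$ and Stirling, this is at least $(K'z/\e)^k$ when $k=m\le z$, and at least $(K'/\e)^z$ when $k=z\le m$; both dominate $e^{z}$ once $K'\ge\e^2$.

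The delicate point is that distinct children (different classes) receive different maximizers. The product of class-maximizers is a legitimate rooted subtree of $T$, and its rooted automorphism count is at least the product of the class factors. However, I must make sure the constant-factor losses do not multiply over the many classes and levels. I would handle this by using the convention that branches with $y\Lambda_c$ small contribute at most a factor $\exp(O(y\Lambda_c))$. That factor is absorbed by dilating $y$, using $\Lambda\ge1$ and the fact that including an edge changes the dilation by a factor $K$ per edge. If needed, I would prove the sharper inductive claim $W_T(y)\le\Lambda_T(Ky)$ when $\Lambda_T(Ky)\ge2$, and $W_T(y)\le 1+O(\Lambda_T(Ky)-1)$ otherwise.
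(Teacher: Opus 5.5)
Your Step~1 is correct, and it is a clean alternative to \cref{lem:reroot}. Orbit--stabilizer gives $\Autr_o(R)\le v(R)\Autr_u(R)$, and $v(R)^2\le4^{e(R)}$, so $\sum_oW_{(H,o)}(x)\le\sum_uW_{T_u}(4x)$.

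Step~2, which is where all the difficulty lies, has a genuine gap. Your recursion $W_T(y)=\prod_c(1+yW_{T_c}(y))$ is false. Suppose the root keeps children whose chosen subtrees $R_i$ fall into rooted types with multiplicities $m_1,\dots,m_s$. Then $\Autr(R)=\prod_jm_j!\prod_i\Autr(R_i)$, and you have dropped the factor $\prod_jm_j!$. Already for a star with $d$ leaves, $W_T(y)=\sum_k(d)_ky^k$, not $(1+y)^d$, so no per-class bound of the form $\sum_k\binom mk(y\Lambda_c)^k\le\e^{z}$ can hold. This factor also does not respect your classes of isomorphic branches. Non-isomorphic branches can carry isomorphic chosen subtrees, since every branch contains the singleton. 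So the class-by-class factorization fails on the lower-bound side as well. Take a root with $d$ pairwise distinct branches, for instance hanging paths of different lengths, with $Ky<1$ and $dy\gg1$. Your product of class maximizers then gives only $\Lambda_T(Ky)\ge1$. But the true $W_T(y)\ge d!\,y^d$ is huge, and the only term of $\Lambda_T$ that can dominate it is the star term $d!(Ky)^d$, whose size comes entirely from the cross-class factorial. Dilating $y$ cannot absorb the accumulated factors $\prod_c\exp(O(y\Lambda_c))$ unless your lower bound sees that term.

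You also never fix an inductive hypothesis that stops constants compounding over children and levels, and your fallback is false. For a single edge, $W_T(y)=1+y$, while $\Lambda_T(Ky)=1$ whenever $Ky<1$, so $W_T(y)\le1+O(\Lambda_T(Ky)-1)$ fails.

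The paper supplies two missing ideas.
\begin{enumerate}
\item Replace $\Autr(R)$ by the plane weight $a(R)=\prod_wd_R^+(w)!$. By \cref{lem:plane} this costs at most $4^{e(R)}$, and it makes the root factor $|I|!$ depend only on the number of selected children. This yields the exact recursions \eqref{eq:recursion} for $Z_T$ and $\Gamma_T$.
\item Prove the strengthened bound $Z_T(x)\le4\max\{1,\varepsilon\Gamma_T(128x)\}$ with $\varepsilon=\exp(-1/(32x))$. The inductive constant $4$ is absorbed into a factor $4x$ per selected child, and the proof splits by the number of high children. With at least two, one factor $\varepsilon$ pays for the subset sum $\sum_j(d)_j(4x)^j\le\exp(1/(128x))\,d!(128x)^d$ and another is passed upward. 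With zero or one, the factorial-sum bounds of \cref{lem:sums} are used.
\end{enumerate}

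Finally, your target $W_T(y)\le4\Lambda_T(256y)$ is stronger than what these tools give, namely $W_T(y)\le Z_T(y)\le4\Gamma_T(128y)\le4\Lambda_T(512y)$. After your lossier Step~1 you would therefore reach $2048x$ rather than $1024x$, unless you reroot directly into $Z_{T_u}(2x)$ as the paper does.
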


The second uses the first-moment constraints defining $q_H$ to
bound the right-hand side.

\begin{proposition}\label[proposition]{prop:average}
For every fixed rooting of $H$,
\[
\sum_{u\in V(H)}\Lambda_{T_u}\!\left(\frac1{32nq_H}\right)\le2n.
\]
\end{proposition}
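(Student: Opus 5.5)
The plan is to choose, for each $u\in V(H)$, a maximizing rooted subtree $R_u\subseteq T_u$ with $\Lambda_{T_u}(x)=\Autr(R_u)x^{e(R_u)}$, where $x=1/(32nq_H)$. I would then group the vertices $u$ according to the rooted isomorphism type of $R_u$ and control each group by the packing bound of \cref{lem:packing-bound}.

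\emph{Singletons.} Vertices with $R_u$ a single vertex contribute exactly $1$ each, so at most $v(H)\le n$ in total. It then suffices to show that the remaining vertices contribute at most $n$.

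\emph{Packing within one type.} Fix a rooted type $S$ with $s=e(S)\ge1$ and $v=s+1$ vertices, and let $U_S$ be the set of $u$ with $R_u\cong S$. The key structural fact is that $R_u$ lies in the descendant tree $T_u$ and has height at most $s$. Hence a vertex $w$ lies in $R_u$ only if $u$ is one of the at most $v$ ancestors of $w$ (including $w$ itself) at distance at most $s$. So every vertex of $H$ lies in at most $v$ of the copies $\{R_u:u\in U_S\}$, and each copy meets at most $v^2$ others, counting itself. A greedy selection therefore yields at least $|U_S|/v^2$ pairwise vertex-disjoint copies of the (unrooted) tree $S$ in $H$, so $b_H(S)\ge|U_S|/v^2$. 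Combining this with \cref{lem:packing-bound} and $\Autr(S)\le\Aut(S)$ gives
\[
|U_S|\,\Autr(S)\,x^{s}\le \e v^2\, n^{v}q_H^{s}\,\frac{1}{(32nq_H)^{s}}=\e (s+1)^2\, n\,32^{-s}.
\]

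\emph{Summing over types.} There are at most $4^s$ rooted tree types with $s$ edges, by the usual Catalan/plane-tree bound. The non-singleton contribution is therefore at most $\e n\sum_{s\ge1}(s+1)^2 8^{-s}$.

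This last step is where I expect the real difficulty. The series is a constant but not obviously at most $1$ (it is roughly $1.85$), so the crude per-type argument loses too much.

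My first attempt to fix it is to pack all types simultaneously. Take a maximal vertex-disjoint family $\mathcal R$ drawn from all the $R_u$ with $e(R_u)\ge1$, so the selected trees $R_{u_1},\dots,R_{u_m}$ form a forest $F\subseteq H$, and apply the first-moment constraint $N_F(K_n)q_H^{e(F)}\ge1$ to this whole forest. Since $\Aut(F)\ge\prod_j\Aut(R_{u_j})\cdot\prod_S m_S!$, where $m_S$ is the multiplicity of type $S$, this gives
\[
\prod_j \Autr(R_{u_j})x^{e(R_{u_j})}\cdot\prod_S m_S!\le n^{v(F)}q_H^{e(F)}x^{e(F)}=n^{m}32^{-e(F)}.
\]
Next I would charge each $u$ to a selected copy meeting $R_u$; each selected copy absorbs at most $v^2$ such $u$ of the same size. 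Then I would compare the sum with the product via AM--GM, using the factor $32^{-e}$ both to pay for the $(s+1)^2$ overlap losses and to dominate the number of types. If the constants still fail, the fallback is to exploit the slack elsewhere. Since \cref{prop:tree-comparison} and the choice $x=c/(nq_H)$ only need some $O(n)$ bound, I would check whether replacing $32$ by a larger absolute constant in the argument, with the bound $2n$ read up to an absolute constant, suffices for the application to \cref{thm:tree-bound}. I would verify the exact constant $2n$ last.
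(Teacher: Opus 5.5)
\textbf{There is a genuine gap: your argument does not reach the stated constant $2n$.}

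Your per-type packing count is correct. As you observed, though, it only yields
$\sum_u\Lambda_{T_u}(1/(32nq_H))\le n+\e n\sum_{s\ge1}(s+1)^2 8^{-s}\approx 2.85n$, so it does not prove the proposition. Neither proposed repair closes the gap.
\begin{itemize}
\item The simultaneous-packing sketch still charges up to $v^2$ vertices $u$ to each selected copy, so it has the same quadratic loss. Its AM--GM step is also not specified.
\item The fallback proves a different statement. One option is to enlarge $32$ to $64$, where your count gives $\e\sum_{s\ge1}(s+1)^2 16^{-s}<1$. Another is to accept $3n$. Either would be harmless for \cref{thm:tree-bound}, which only needs an $O(n)$ bound. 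But $\Lambda_T$ is nondecreasing in $x$, so a bound at a smaller scale does not imply the bound claimed here.
\end{itemize}

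\textbf{The missing idea is that the overlap structure among the selected copies is degenerate, not merely of bounded degree.} Order the copies $R_u$ with $e(R_u)=j$ by decreasing depth of $u$, breaking ties arbitrarily. Suppose an earlier $R_v$ meets $R_u$ at $w$. Then $u$ and $v$ are both ancestors of $w$. Since $v\neq u$, they have different depths, so $v$ is a proper descendant of $u$ lying on the $u$--$w$ path. That path lies in $R_u$, so $v\in V(R_u)\setminus\{u\}$. At most one copy is rooted at each vertex, so $R_u$ meets at most $j$ earlier copies.

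Greedy coloring therefore splits $\{R_u:e(R_u)=j\}$ into at most $j+1$ vertex-disjoint families; this is \cref{lem:disjoint}. Consequently $|U_S|\le(s+1)\,b_H$ of the unrooted tree underlying $S$, rather than $(s+1)^2$ times it. The non-singleton total becomes $\e n\sum_{s\ge1}(s+1)8^{-s}=\tfrac{15\e}{49}n<n$, which gives $2n$.

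Your observation that $u$ must be an ancestor of $w$ within distance $s$ already contains the needed ingredient. You used it only to bound the degree of the conflict graph, rather than to orient it by depth.
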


The proofs of \cref{prop:tree-comparison,prop:average} are given in
Sections~\ref{subsec:tree-comparison} and~\ref{subsec:tree-average},
respectively. We first show how they imply \cref{thm:tree-bound}.

\begin{proof}[Proof of \cref{thm:tree-bound}]
Take $x=1/(2^{15}nq_H)$, so that $1024x=1/(32nq_H)$.
By \eqref{eq:extract} and
\cref{prop:tree-comparison,prop:average},

\[
\begin{aligned}
\sum_F\Inj(F,H)x^{e(F)}
&\le\sum_{o\in V(H)}W_{(H,o)}(x)\\
&\le4\sum_{u\in V(H)}\Lambda_{T_u}(1024x)
\le8n.
\end{aligned}
\]
By nonnegativity, each summand is at most $8n$.
Since $e(F)\ge1$ and $v(F)=e(F)+1$, it follows that
\[
\Inj(F,H)\le8n(2^{15}nq_H)^{e(F)}
\le n^{v(F)}(2^{18}q_H)^{e(F)}. \qedhere
\]
\end{proof}

\subsection{Comparing subtree sums with their largest terms}\label{subsec:tree-comparison}

To prove \cref{prop:tree-comparison}, we first pass from whole-tree
rootings to descendant trees, then compare each resulting sum with
its largest term. For the latter step, we seek recursions that allow
the contributions from different child branches to be handled
independently. The weight $\Autr(R)$ makes this awkward: an
automorphism can permute child subtrees only when they are
rooted-isomorphic, so the factor at the root depends on the types
of the chosen child subtrees, not just on their number.

We therefore replace $\Autr(R)$ by
\[
a(R):=\prod_{w\in V(R)}d_R^+(w)!,
\]
where $d_R^+(w)$ is the number of children of $w$ in $R$.
If the root has child subtrees $R_1,\ldots,R_k$, then
$a(R)=k!\prod_{i=1}^ka(R_i)$.
This gives the simple sum and maximum recursions used below;
the degree-based formula also makes changes of root easy to control.
The following lemma shows that the total loss in replacing
$\Autr(R)$ by $a(R)$ is at most $4^{e(R)}$, which can be absorbed
by replacing $x$ with $4x$ in the subtree sums.
The comparison follows from \cite[\textup{Ch.~3}]{HP} and
\cite[\textup{Ex.~6.19}]{Stanley}.

\begin{lemma}\label[lemma]{lem:plane}
For every rooted tree $R$,
\[
\Autr(R)\le a(R)\le4^{e(R)}\Autr(R).
\]
\end{lemma}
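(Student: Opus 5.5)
The plan is to handle the lower bound by induction on the root decomposition, and the upper bound by letting $\Autr(R)$ act on orderings of children and counting orbits as plane trees.

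\emph{Lower bound.} I would induct on $e(R)$; the one-vertex tree has $\Autr(R)=a(R)=1$. Suppose the root has child subtrees $R_1,\ldots,R_k$. Group these into rooted-isomorphism classes of sizes $m_1,\ldots,m_j$, so $\sum_i m_i=k$. A root-preserving automorphism of $R$ permutes the children of the root. It must send each child to a child whose descendant subtree is rooted-isomorphic to its own. Composing with an automorphism inside each branch then gives the standard formula
\[
\Autr(R)=\prod_{i=1}^j m_i!\,\prod_{l=1}^k\Autr(R_l).
\]
Since $\prod_i m_i!\le k!$, the induction hypothesis $\Autr(R_l)\le a(R_l)$ and the recursion $a(R)=k!\prod_l a(R_l)$ give $\Autr(R)\le a(R)$.

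\emph{Upper bound.} The key observation is that $a(R)$ counts the \emph{plane structures} on $R$, where a plane structure is a choice of linear order on the children of every vertex. The group of root-preserving automorphisms of $R$, which has $\Autr(R)$ elements, acts on these plane structures by transport.

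I would first show this action is free. Suppose an automorphism $\phi$ fixes a plane structure. I would show by induction from the root downward that $\phi$ fixes every vertex. It fixes the root. If it fixes a vertex $w$, it maps the ordered list of children of $w$ to itself in order, so it fixes each child.

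By orbit--stabilizer, $a(R)=\Autr(R)\cdot(\text{number of orbits})$. Two plane structures lie in the same orbit exactly when the resulting plane (ordered) rooted trees are isomorphic. Hence the orbits inject into the set of isomorphism types of plane rooted trees with $e(R)$ edges. That set has size equal to the Catalan number $\frac{1}{e+1}\binom{2e}{e}\le4^{e}$ with $e=e(R)$. This yields $a(R)\le4^{e(R)}\Autr(R)$.

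The only step requiring care is the freeness of the action together with the identification of orbits with plane-tree isomorphism types. Both are short inductions from the root. Everything else is the standard Catalan count, as in \cite[\textup{Ex.~6.19}]{Stanley}.
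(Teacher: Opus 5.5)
Your proof is correct. It shares the paper's framework: both identify $a(R)/\Autr(R)$ with the number of isomorphism types of plane rooted trees whose underlying rooted tree is $R$, and both bound this number by the Catalan number $\frac{1}{e(R)+1}\binom{2e(R)}{e(R)}\le 4^{e(R)}$.

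The difference lies in how you obtain the identity $a(R)=\Autr(R)\cdot(\text{number of plane versions})$.

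\begin{itemize}
\item \textbf{The paper's route.} It proves the identity by induction on the root decomposition. It checks three recursions: $a(R)=k!\prod a(R_i)$, $\Autr(R)=\prod m_j!\prod\Autr(R_i)$, and the multinomial recursion $p(R)=\frac{k!}{m_1!\cdots m_s!}\prod p(R_i)$. It then observes that the ratio $p(R)\Autr(R)/a(R)$ is multiplicative over the children.
\item \textbf{Your route.} You let $\Autr(R)$ act by transport on the $a(R)$ plane structures and prove the action is free by a top-down induction. You then identify the orbits with plane isomorphism types. Orbit--stabilizer gives the identity at once, without the recursion for $p(R)$, which needs a little care about child positions.
\end{itemize}

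One consequence you did not exploit: freeness already gives $a(R)=\Autr(R)\cdot(\text{number of orbits})\ge\Autr(R)$, since there is at least one orbit. So your separate inductive lower bound via $\prod m_i!\le k!$ is valid but redundant. The paper likewise gets both inequalities from the single chain $1\le p(R)\le\frac{1}{e(R)+1}\binom{2e(R)}{e(R)}$.

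The paper's recursive argument matches the root-decomposition recursions used throughout that section. Yours is more conceptual and self-contained.
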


\begin{proof}
A plane rooted tree is a rooted tree together with a linear order on the children of each vertex.
Let $p(R)$ be the number of nonisomorphic plane rooted trees
obtained by ordering the children at each vertex of $R$,
where isomorphisms preserve the root and all child orders. We claim that
$p(R)=\frac{a(R)}{\Autr(R)}$; see also \cite[Ch.~3]{HP}.

This identity implies the lemma. Indeed, if $R$ has $m$ edges,
its plane versions form a nonempty subset of all plane rooted
isomorphism types with $m$ edges. By \cite[Ex.~6.19]{Stanley},
the latter are counted by the Catalan number
$\frac1{m+1}\binom{2m}{m}$, so
$1\le\frac{a(R)}{\Autr(R)}
\le\frac1{m+1}\binom{2m}{m}\le4^m$.

It remains to establish the identity, which we prove by induction
on $e(R)$. For the one-vertex tree, all three quantities are one.
Suppose now that the root of $R$ has child subtrees
$R_1,\ldots,R_k$, with multiplicities $m_1,\ldots,m_s$ among
their rooted isomorphism types. Then
\[
a(R)=k!\prod_{i=1}^ka(R_i),
\qquad
\Autr(R)=\left(\prod_{j=1}^sm_j!\right)
               \prod_{i=1}^k\Autr(R_i).
\]
The second formula follows because an automorphism permutes
children of the same type and independently acts within each
child subtree. To form a plane version, first order the child
types in $k!/(m_1!\cdots m_s!)$ ways and then choose a plane
version for each child position. Therefore,
\[
p(R)=\frac{k!}{m_1!\cdots m_s!}\prod_{i=1}^kp(R_i).
\]
Combining these recursions gives
$\frac{p(R)\Autr(R)}{a(R)}=\prod_{i=1}^k\frac{p(R_i)\Autr(R_i)}{a(R_i)}=1$.
\end{proof}

We now approximate $W_T$ using the plane weight. Define
\[
Z_T(x):=\sum_{R\subseteq T}a(R)x^{e(R)}.
\]
By \eqref{eq:coefficients} and \cref{lem:plane},
\[
W_T(x)\le Z_T(x)\le W_T(4x).
\]

We next use the reference root $r$ to pass from whole-tree
rootings to descendant trees. Every subtree $R\subseteq H$
has a unique vertex $u$ closest to $r$ and, rooted at $u$,
is a rooted subtree of $T_u$. The following lemma bounds
the total contribution of all rootings of $R$ by its term
in $Z_{T_u}(2x)$.

\begin{lemma}\label[lemma]{lem:reroot}
For every $x\geq0$,
\[
\sum_{o\in V(H)}W_{(H,o)}(x)
\le\sum_{u\in V(H)}Z_{T_u}(2x).
\]
\end{lemma}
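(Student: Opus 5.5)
We prove the bound one subtree of $H$ at a time. By \eqref{eq:coefficients},
\[
\sum_{o}W_{(H,o)}(x)=\sum_{R}\;\sum_{o\in V(R)}\Autr(R,o)\,x^{e(R)},
\]
where $R$ ranges over subtrees of $H$ (connected subgraphs, including single vertices) and $\Autr(R,o)$ denotes the root-preserving automorphism count of $R$ rooted at $o$. Indeed, a rooted subtree of $(H,o)$ is exactly a subtree $R$ containing $o$, rooted at $o$.

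On the other side, each subtree $R$ has a unique vertex $u$ closest to $r$. Rooted at $u$, $R$ is a rooted subtree of $T_u$, and conversely every rooted subtree of $T_u$ arises from exactly one such $R$. Hence
\[
\sum_u Z_{T_u}(2x)=\sum_R a(R,u_R)\,2^{e(R)}x^{e(R)}.
\]
It therefore suffices to prove, for every tree $R$ and every vertex $u$ of $R$,
\[
\sum_{o\in V(R)}\Autr(R,o)\le 2^{e(R)}\,a(R,u),
\]
where $a(R,u)=\prod_w d^+(w)!$ is computed in the rooting at $u$.

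To prove this, we first use \cref{lem:plane} to bound $\Autr(R,o)\le a(R,o)$. We then compare $a(R,o)$ with $a(R,u)$ directly. Rerooting from $u$ to $o$ changes child counts only along the path $u=w_0,w_1,\dots,w_m=o$. The vertex $w_0$ loses one child. Each intermediate $w_i$ keeps the same number of children, since it swaps child $w_{i+1}$ for parent $w_{i-1}$. The vertex $o$ gains one child, its former parent. Consequently
\[
a(R,o)=a(R,u)\cdot\frac{(d^+_u(o)+1)!}{d^+_u(o)!}\cdot\frac{(d^+_u(u)-1)!}{d^+_u(u)!}\le a(R,u)\,\bigl(d^+_u(o)+1\bigr),
\]
where $d^+_u$ denotes child counts in the rooting at $u$. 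When $o=u$ the ratio is $1$.

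Summing over $o$ gives
\[
\sum_o a(R,o)\le a(R,u)\sum_o\bigl(d^+_u(o)+1\bigr)=a(R,u)\bigl(e(R)+v(R)\bigr)=a(R,u)\,\bigl(2e(R)+1\bigr).
\]
Finally, $2m+1\le 2^{m}$ fails only for $m=1,2$, so this factor needs care. For $m=0$ it is $1$, which is fine. For small $m$ we check directly, or use the sharper count obtained before applying the crude bound $\Autr\le a$. For $m=1$: $R$ is an edge, $\Autr=1$ for both roots, so the sum is $2\le 2\cdot a=2$. For $m=2$: $R$ is a path. Rooted at an end, $\Autr=1$; rooted at the middle, $\Autr=2$. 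The total is $4\le 4\,a(R,u)$, since $a\ge1$. For $m\ge3$ we have $2m+1\le 2^m$. This establishes the displayed inequality and hence the lemma.

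The only delicate point is the small cases $e(R)\in\{1,2\}$, where the linear factor $2e(R)+1$ exceeds $2^{e(R)}$ and a direct count is needed. The main structural step is the path-rerooting computation, which shows that only the two endpoints $u$ and $o$ change their child counts.
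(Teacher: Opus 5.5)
Your proof is correct and follows essentially the paper's route. You decompose by unrooted subtree $R$ with its vertex $u$ closest to $r$, bound $\Autr(R,o)\le a(R,o)$ via \cref{lem:plane}, and compare $a(R,o)$ with $a(R,u)$; your rerooting ratio $d_R(o)/d_R(u)$ is the paper's identity $a(R,o)=d_R(o)K(R)$ with $K(R)=\prod_w(d_R(w)-1)!$. The only difference is that you bound the $o=u$ term by $d_R(u)+1$ instead of by $1$, which yields $2e(R)+1$ rather than the paper's $2e(R)\le 2^{e(R)}$; that is the sole reason you needed the direct checks for $e(R)\in\{1,2\}$.
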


\begin{proof}
By \eqref{eq:coefficients}, a fixed unrooted subtree $R\subseteq H$
contributes $x^{e(R)}\sum_{o\in V(R)}\Autr(R,o)$ to the left-hand side,
where $\Autr(R,o)$ counts automorphisms fixing $o$.
Suppose $e(R)\geq1$, and let $u$ be its vertex closest to the fixed root
of $H$. Set $K(R)=\prod_{w\in V(R)}(d_R(w)-1)!$.

When $R$ is rooted at $o$, the root has $d_R(o)$ children and every other
vertex has one fewer child than its degree. Thus
$a(R,o)=d_R(o)K(R)$. By \cref{lem:plane} and the Handshaking Lemma,
\[
\sum_{o\in V(R)}\Autr(R,o)
\le\sum_{o\in V(R)}a(R,o)
=2e(R)\,K(R)\le2 e(R)\,a(R,u)\le2^{e(R)} a(R,u).
\]
Therefore all rootings of $R$ contribute at most
$a(R,u)(2x)^{e(R)}$, its term in $Z_{T_u}(2x)$. Each singleton contributes
one on both sides. Summing over the actual subtrees of $H$ proves the
inequality.
\end{proof}

It remains to compare $Z_T(x)$ with its largest term,
\[
\Gamma_T(x):=\max_{R\subseteq T}a(R)x^{e(R)}.
\]
By \cref{lem:plane}, $\Gamma_T(x)\le\Lambda_T(4x)$.
The following lemma gives the required comparison.

\begin{lemma}\label[lemma]{lem:scalar}
For every rooted tree $T$ and every $x\geq0$,
\[
Z_T(x)\le4\Gamma_T(128x).
\]
\end{lemma}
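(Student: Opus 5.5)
The plan is to prove the bound by induction on the height of $T$. The recursion $a(R)=k!\prod_{i=1}^k a(R_i)$ factors both $Z_T$ and $\Gamma_T$ over the children of the root. If the root of $T$ has children $c_1,\dots,c_d$ with descendant trees $T_1,\dots,T_d$, then a rooted subtree $R\subseteq T$ is a choice of a set $K\subseteq[d]$ of retained children together with a rooted subtree $R_i\subseteq T_i$ for each $i\in K$. Therefore
\[
Z_T(x)=\sum_{K\subseteq[d]}|K|!\,x^{|K|}\prod_{i\in K}Z_{T_i}(x),
\qquad
\Gamma_T(x)=\max_{K\subseteq[d]}|K|!\,x^{|K|}\prod_{i\in K}\Gamma_{T_i}(x).
\]
The one-vertex tree is the base case, where $Z=\Gamma=1$.

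For the inductive step, the hypothesis $Z_{T_i}(x)\le4\Gamma_{T_i}(128x)$ is applied to each retained child. The factor $4^{|K|}$ this produces is absorbed into the edge weights. Writing $g_i:=\Gamma_{T_i}(128x)\ge1$ and $y:=4x$, this gives
\[
Z_T(x)\le\sum_{K\subseteq[d]}|K|!\,y^{|K|}\prod_{i\in K}g_i .
\]
It then suffices to prove a scalar lemma: for reals $g_1,\dots,g_d\ge1$ and $y\ge0$,
\[
\sum_{K\subseteq[d]}|K|!\,y^{|K|}\prod_{i\in K}g_i\le4\max_{K\subseteq[d]}|K|!\,(32y)^{|K|}\prod_{i\in K}g_i .
\]
With $32y=128x$, the right-hand side is exactly $4\Gamma_T(128x)$, which closes the induction with the same constants $4$ and $128$. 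The induction works with these constants precisely because the per-child constant is absorbed into the edge weight rather than accumulating.

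The main obstacle is this scalar lemma. The difficulty is that the number of $k$-sets is $\binom dk$, which the $32^{-k}$ gain from rescaling alone does not beat. I would sort $g_1\ge g_2\ge\cdots\ge g_d$ and put $P_k:=g_1\cdots g_k$, so the maximum over $|K|=k$ is $k!\,z^kP_k$ with $z=32y$.

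My first attempt is to group the $k$-sets by their profile of ranks and compare each with an initial segment. A set $K=\{i_1<\dots<i_k\}$ satisfies $\prod_{i\in K}g_i\le P_k$. Moreover, $P_{i_k}\ge P_k$ is at least as large, and it carries the factor $(i_k)!/k!$. This suggests splitting into two cases. In the first case $z\,i_k\gtrsim1$, and the initial segment $[i_k]$ at scale $z$ dominates with room to spare. In the second case the indices are small compared with $1/z$, so $k!\,y^k\binom{i_k}{k}$ is geometrically small.

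A cleaner alternative I would also try is induction on $d$. Let $f_d(y)$ denote the left-hand side. Adding $g_{d+1}$ gives
\[
f_{d+1}(y)=f_d(y)+g_{d+1}\,y\sum_K(|K|+1)!\,y^{|K|}\prod_{i\in K}g_i,
\]
and I would bound the extra factor $|K|+1$ by a slack factor $2^{|K|}$.

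In either route, the target is that the terms, normalised by the maximum at scale $32y$, decay like $2^{-k}$ beyond the maximising size. Summing two geometric tails, one on each side of the maximiser, would give the constant $4$.
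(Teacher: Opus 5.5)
Your recursions for $Z_T$ and $\Gamma_T$ are correct, and so is absorbing the per-child factor $4$ into the weight $4x$; the paper does both. The gap is the scalar lemma. It is false, so the hypothesis $Z_{T_i}(x)\le 4\Gamma_{T_i}(128x)$ cannot be propagated through the root this way.

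Take $g_i=d/i$ for $i\in[d]$ and $y=1/(32d)$. Since the $g_i$ decrease, every $K$ with $|K|=k$ has $\prod_{i\in K}g_i\le d^k/k!$. Hence each term $|K|!\,(32y)^{|K|}\prod_{i\in K}g_i$ is at most $1$, and your right-hand side equals $4$. The left-hand side is at least $1+y\sum_i g_i=1+H_d/32$, where $H_d=\sum_{i\le d}1/i$, and this exceeds $4$ once $H_d>96$. Taking $y=1/(\lambda d)$ shows that no absolute constants $C,\lambda$ in place of $4,32$ repair the inequality, so any induction that remembers only the children's $\Gamma$-values fails.

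This example also shows where each of your routes breaks. In the first route, every set has $z\,i_k\le 1$. The factor $k!\,y^k\binom{i_k}{k}\le 32^{-k}$ is geometrically small, but it multiplies $P_k=d^k/k!$, while the maximum is only $1$. In the second route, the bound $|K|+1\le 2^{|K|}$ doubles the scale at each step of the induction on $d$. Moreover, the term $f_d(y)$ alone already uses up the whole constant $4$.

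The missing idea is that a child whose largest term is only moderately large has a \emph{bounded} subtree sum, not merely one bounded by $4\Gamma_{T_i}(128x)$. The paper therefore proves the stronger statement $Z_T(x)\le 4\max\{1,\varepsilon\Gamma_T(128x)\}$ with $\varepsilon=\exp(-1/(32x))$, by induction.

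Call a child high if $\varepsilon\Gamma_{U_i}(128x)>1$.
\begin{itemize}
\item A low child contributes only the factor $4$, which is absorbed into $4x$, however large its $\Gamma$ is below $e^{1/(32x)}$. With no high child, $Z_T(x)\le\sum_j(d)_j(4x)^j$, and this is at most $2$ or at most $\varepsilon\,d!\,(128x)^d$.
\item Each high child carries an extra factor $\varepsilon$. With at least two high children, one $\varepsilon$ pays for the entropy loss in $\sum_j(d)_j(4x)^j\le \exp(1/(128x))\,d!\,(128x)^d$, and another supplies the $\varepsilon$ required at the parent.
\item Exactly one high child is handled by splitting on whether it is selected.
\end{itemize}
In your counterexample, with $x=y/4=1/(128d)$, every child is low, since $g_i\le d<e^{4d}$. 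There the strengthened hypothesis gives $Z_T(x)\le 2$ directly.
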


We first show how this estimate completes the proof of
\cref{prop:tree-comparison}.

\begin{proof}[Proof of \cref{prop:tree-comparison}]
By \cref{lem:reroot,lem:scalar} and the weight comparison,

\begin{align*}
\sum_{o\in V(H)}W_{(H,o)}(x)
&\le\sum_{u\in V(H)}Z_{T_u}(2x)\\
&\le4\sum_{u\in V(H)}\Gamma_{T_u}(256x)
\le4\sum_{u\in V(H)}\Lambda_{T_u}(1024x).\qedhere
\end{align*}
\end{proof}

We now prove \cref{lem:scalar}, using the recursions supplied
by the weight $a(R)$.

\begin{proof}[Proof of \cref{lem:scalar}]
Let the root of $T$ have child trees $U_1,\ldots,U_d$. Then
\begin{equation}\label{eq:recursion}
Z_T(x)=\sum_{I\subseteq[d]}|I|!\,x^{|I|}
  \prod_{i\in I}Z_{U_i}(x),\qquad
\Gamma_T(x)
=\max_{I\subseteq[d]}|I|!\,x^{|I|}
  \prod_{i\in I}\Gamma_{U_i}(x).
\end{equation}
Indeed, a rooted subtree $R\subseteq T$ is determined by a set $I$ of root children and a rooted subtree $R_i\subseteq U_i$ for every $i\in I$. For these choices, we have
$e(R)=|I|+\sum_{i\in I}e(R_i)$ and
$a(R)=|I|!\prod_{i\in I}a(R_i)$.
Summing over the choices gives the first recursive formula. Maximizing gives the second one, since the choices below different children are independent.

To close the induction with an absolute constant, we prove
the stronger bound
\[
Z_T(x)\le4\max\{1,\varepsilon\Gamma_T(128x)\},
\qquad
\varepsilon:=\exp\!\left(-\frac1{32x}\right).
\]
Since $\Gamma_T(128x)\geq1$ and
$\varepsilon\le1$, this estimate implies the lemma. The induction is on $v(T)$, and the singleton case is immediate.
For the induction step, call an index $i$ \emph{high} if
$\varepsilon\Gamma_{U_i}(128x)>1$.
Each high child supplies a factor $\varepsilon$ in the
inductive bound. With at least two high children, one such
factor absorbs the cost of summing over choices of children,
while another provides the saving required at the parent.
The cases of zero or one high child require separate estimates.

The inductive hypothesis and the first identity in
\eqref{eq:recursion} give
\[
Z_T(x)\le
\sum_{I\subseteq[d]}|I|!\,(4x)^{|I|}
\prod_{i\in I}\max\{1,\varepsilon\Gamma_{U_i}(128x)\}.
\]
The second identity in \eqref{eq:recursion}
gives, for every $I\subseteq[d]$,
\[
\Gamma_T(128x)\geq
|I|!\,(128x)^{|I|}\prod_{i\in I}\Gamma_{U_i}(128x).
\]
We will choose $I$ in this lower bound according to the number
of high indices. The factorial-sum estimates needed below are
collected in \cref{lem:sums} in Appendix~\ref{sec:scalar}.

\smallskip\noindent
\emph{No high index.}
Here every factor in the product is one, so
\[
Z_T(x)\le\sum_{j=0}^d(d)_j(4x)^j.
\]
Taking $I=[d]$ and using $\Gamma_{U_i}(128x)\geq1$ gives
$\Gamma_T(128x)\geq d!(128x)^d$.
For $dx \le1/8$, \cref{lem:sums} bounds the sum by $2$;
for $dx>1/8$, it bounds the sum by
$\varepsilon d!(128x)^d\le\varepsilon\Gamma_T(128x)$.
Thus $Z_T(x)\le\max\{2,\varepsilon\Gamma_T(128x)\}$.

\smallskip\noindent
\emph{At least two high indices.}
In this case, we have
\[
Z_T(x)\le
\left(\prod_{i\text{ high}}\varepsilon\Gamma_{U_i}(128x)\right)
\sum_{j=0}^d(d)_j(4x)^j.
\]
Taking $I=[d]$ and using $\Gamma_{U_i}(128x)\geq1$ for the other
indices gives
\[
\Gamma_T(128x)\geq
d!(128x)^d\prod_{i\text{ high}}\Gamma_{U_i}(128x).
\]
The first estimate of \cref{lem:sums} therefore gives
\[
Z_T(x)\le
\varepsilon^2\Gamma_T(128x)\exp\!\left(\frac1{128x}\right)
\le\varepsilon\Gamma_T(128x),
\]
since there are at least two high indices.

\smallskip\noindent
\emph{Exactly one high index.}
Relabel the children so that $d$ is the high index.
Splitting according to whether this child is selected gives
\[
Z_T(x)\le
\sum_{j=0}^{d-1}(d-1)_j(4x)^j
+4x\varepsilon\Gamma_{U_d}(128x)
\sum_{j=0}^{d-1}(j+1)(d-1)_j(4x)^j.
\]
If $(d-1)x \le1/8$, selecting just the high child gives
$\Gamma_T(128x)\geq128x\Gamma_{U_d}(128x)$,
and \cref{lem:sums} gives
\[
Z_T(x)\le
2+16x\varepsilon\Gamma_{U_d}(128x)
\le2+\frac{\varepsilon\Gamma_T(128x)}8.
\]
Now suppose $(d-1)x>1/8$. Selecting the other $d-1$ children, or all $d$
children, gives respectively
\[
\Gamma_T(128x)\geq(d-1)!(128x)^{d-1},
\qquad
\Gamma_T(128x)\geq d!(128x)^d\Gamma_{U_d}(128x).
\]
The remaining estimates of \cref{lem:sums} give
\[
Z_T(x) \le \varepsilon\, (d-1)!(128x)^{d-1}+\frac{1}{32}\varepsilon\, d!(128x)^d\Gamma_{U_d}(128x)\le
2\varepsilon\Gamma_T(128x).
\]
This completes the induction.
\end{proof}

\subsection{Averaging the largest terms}\label{subsec:tree-average}

Recall that $T_u$ is the subtree spanned by $u$ and its
descendants under the fixed rooting of $H$, rooted at $u$.
We prove \cref{prop:average} by choosing a maximizing rooted
subtree of each $T_u$ and grouping these subtrees by rooted
isomorphism type. If two selected subtrees intersect, one of
their roots is an ancestor of the other. The first lemma below
uses this observation to partition the occurrences of a type
with $j$ edges into at most $j+1$ families of pairwise
vertex-disjoint trees. The packing bound then controls the
size of each family.

\begin{lemma}\label[lemma]{lem:disjoint}
For each $u\in V(H)$, choose a rooted subtree $R_u\subseteq T_u$.
For every $j\ge1$, the family $\{R_u:e(R_u)=j\}$ can be
partitioned into at most $j+1$ families of pairwise
vertex-disjoint trees.
\end{lemma}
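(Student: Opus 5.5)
We colour greedily from the top of the fixed rooting downward, with one colour class per family. First we record a structural fact about the chosen rooted subtrees. Each $R_u$ is a connected subgraph of $T_u$ containing $u$, so every vertex of $R_u$ is $u$ or a descendant of $u$. Moreover, for any $y\in V(R_u)$ the whole path from $u$ to $y$ lies in $R_u$, since in a tree the unique path between two vertices of a connected subgraph stays inside it.

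\emph{Step 1: intersections force containment of a root.} Suppose $R_u$ and $R_w$ share a vertex $y$ with $u\neq w$. Then $y$ is a descendant (or equal) of both $u$ and $w$. Hence $u$ and $w$ both lie on the path from $y$ to the reference root $r$, so one of them is an ancestor of the other; say $u$ is a proper ancestor of $w$. The path from $u$ to $y$ passes through $w$, and by the remark above this path lies in $R_u$. Therefore $w\in V(R_u)$. So $R_u$ and $R_w$ intersect only if the deeper root lies in the tree of the shallower one.

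\emph{Step 2: few conflicting ancestors.} Fix $j\ge1$ and a vertex $w$ with $e(R_w)=j$. Consider the proper ancestors $u$ of $w$ with $e(R_u)=j$ and $w\in V(R_u)$. For each such $u$, the path from $u$ to $w$ lies in $R_u$, so the distance between $u$ and $w$ is at most $e(R_u)=j$. Proper ancestors of $w$ at distance at most $j$ are determined by that distance, so there are at most $j$ of them.

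\emph{Step 3: greedy colouring.} Order the vertices $u$ with $e(R_u)=j$ by nondecreasing depth in $(H,r)$. Assign each one in turn a colour from $\{1,\ldots,j+1\}$ that differs from the colours of the already coloured $u'$ with $R_{u'}\cap R_u\neq\varnothing$.

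When $w$ is coloured, every earlier vertex $u'$ whose tree meets $R_w$ is handled by Step 1:
\begin{itemize}
\item Either $u'$ is a proper ancestor of $w$ with $w\in V(R_{u'})$.
\item Or $w$ is a proper ancestor of $u'$. This is impossible, since $u'$ is not deeper than $w$.
\end{itemize}
By Step 2 there are at most $j$ forbidden colours, so a colour is always available. Each colour class is then a family of pairwise vertex-disjoint trees, which gives the required partition into at most $j+1$ families.

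The only real point is Step 1, namely that connectivity of $R_u$ inside the descendant tree forces the deeper root into the shallower tree. Steps 2 and 3 are routine once this is in place.
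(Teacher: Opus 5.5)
Your proof is correct and uses the same greedy $(j+1)$-colouring as the paper, built on the same observation that intersecting selected subtrees have the deeper root inside the shallower tree. The only difference is the order: the paper colours from deepest to shallowest root and counts conflicts as selected roots in $V(R_u)\setminus\{u\}$, whereas you colour from shallowest to deepest and count proper ancestors of $w$ within distance $j$; both bound the forbidden colours by $j$.
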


\begin{proof}
Fix $j\ge1$ and consider only the selected subtrees with
exactly $j$ edges. Order them by decreasing distance of
their roots from the fixed root of $H$, breaking ties
arbitrarily. We color them greedily using $j+1$ colors,
requiring intersecting subtrees to receive different colors.

Suppose an earlier subtree $R_v$ meets the current subtree
$R_u$ at a vertex $w$. Both $u$ and $v$ are ancestors of $w$,
so both lie on the unique path from the fixed root of $H$ to $w$. By the chosen order, $v$ must be a
proper descendant of $u$. Thus $v$ lies on the path from $u$
to $w$, which is contained in $R_u$, and hence
$v\in V(R_u)\setminus\{u\}$.

Since $R_u$ has $j$ edges and at most one selected subtree
is rooted at each vertex, at most $j$ earlier subtrees meet
$R_u$. Therefore at most $j$ colors are forbidden, and a
color is always available. The color classes give the
required partition.
\end{proof}

\begin{lemma}\label[lemma]{lem:type-count}
For each $u\in V(H)$, choose a rooted subtree $R_u\subseteq T_u$.
Let $P$ be a rooted tree with $j\geq1$ edges, and let $\bar P$
be the tree obtained by forgetting its root. The number of vertices $u$
for which $R_u$ has type $P$ is at most
\[
\frac{(j+1)\e n(nq_H)^j}{\Aut(\bar P)}.
\]
\end{lemma}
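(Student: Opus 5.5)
We combine \cref{lem:disjoint} with the packing bound of \cref{lem:packing-bound}, applied to the unrooted tree $\bar P$.

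Let $U_P$ be the set of vertices $u$ with $R_u$ of rooted type $P$. Each $R_u$ with $u\in U_P$ is a subgraph of $H$ with $j$ edges, isomorphic (as an unrooted tree) to $\bar P$. In particular $\bar P$ is a connected nonempty subgraph of $H$ up to isomorphism, so \cref{lem:packing-bound} applies to it.

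The family $\{R_u:u\in U_P\}$ is a subfamily of $\{R_u:e(R_u)=j\}$. By \cref{lem:disjoint}, the latter can be partitioned into at most $j+1$ families of pairwise vertex-disjoint trees. Restricting this partition to $U_P$ gives at most $j+1$ classes, each consisting of pairwise vertex-disjoint copies of $\bar P$ in $H$.

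Two things must hold for the count to go through:
\begin{itemize}
\item Distinct vertices $u$ give distinct subtrees $R_u$, since $R_u$ is rooted at $u$ and a single subtree cannot be counted twice within one class. More to the point, each $u$ is counted exactly once.
\item Disjoint copies count toward $b_H(\bar P)$ regardless of their roots.
\end{itemize}
Given these, each class has size at most $b_H(\bar P)$.

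By \cref{lem:packing-bound} with $v(\bar P)=j+1$ and $e(\bar P)=j$,
\[
b_H(\bar P)\le \frac{\e n^{j+1}q_H^{j}}{\Aut(\bar P)}=\frac{\e n(nq_H)^j}{\Aut(\bar P)}.
\]
Multiplying by the at most $j+1$ classes gives $|U_P|\le (j+1)\e n(nq_H)^j/\Aut(\bar P)$, which is the claim.

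None of this is a real obstacle. The only point to check is that the packing bound uses the unrooted automorphism group $\Aut(\bar P)$, which matches the statement, and that $\bar P\subseteq H$ holds so that $q_H$ constrains $bJ$. Both are immediate.
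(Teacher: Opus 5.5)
Your proposal is correct and follows the paper's proof: restrict the partition from \cref{lem:disjoint} to the subtrees of type $P$, bound each class of pairwise vertex-disjoint copies by $b_H(\bar P)$, and apply \cref{lem:packing-bound}. The only case you leave implicit is $U_P=\varnothing$, where $\bar P$ need not embed in $H$; there the bound holds trivially.
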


\begin{proof}
By \cref{lem:disjoint}, these subtrees split into at most $j+1$
families of pairwise vertex-disjoint copies of $\bar P$. Each family has size at most $\e n^{j+1}q_H^j/\Aut(\bar P)$ by
\cref{lem:packing-bound}. Summing gives the claim.
\end{proof}

This explains the choice of scale and weight. Multiplication by
$x^j$ at $x=1/(32nq_H)$ cancels $(nq_H)^j$. The ratio
$\Autr(P)/\Aut(\bar P)$ is at most one, so the automorphism weight
also fits within the packing bound. The remaining factor $32^{-j}$
allows us to sum over the at most $4^j$ rooted types.

\begin{proof}[Proof of \cref{prop:average}]
For each $u\in V(H)$, choose a rooted subtree $R_u\subseteq T_u$
maximizing $\Autr(R_u)/(32nq_H)^{e(R_u)}$.
Selected singletons contribute at most $v(H)\le n$.
If $k_P$ vertices select a type $P$ with $j\geq1$ edges,
then \cref{lem:type-count} gives
\[
\frac{k_P\Autr(P)}{(32nq_H)^j}
\le\e n(j+1)32^{-j},
\]
because $\Autr(P)\le\Aut(\bar P)$.
Forgetting the child orders maps plane rooted types onto rooted types,
so there are at most $4^j$ rooted types with $j$ edges.
Therefore,
\[
\sum_u\Lambda_{T_u}\!\left(\frac1{32nq_H}\right)
\le n + \e n\sum_{j\geq1}(j+1)8^{-j}
<2n.
\qedhere
\]
\end{proof}

\section*{Use of AI tools.}
The author used ChatGPT to explore ideas, assist with exposition, conduct literature searches, and check mathematical arguments. The author assumes full responsibility for the content, validity, and attribution of all mathematical claims.

\appendix

\section{Two factorial sums}\label{sec:scalar}

\begin{lemma}\label[lemma]{lem:sums}
Let $S_d(x)=\sum_{j=0}^d(d)_jx^j$ and $V_d(x)=\sum_{j=0}^d(j+1)(d)_jx^j$. The following hold.
\begin{itemize}
\item[\rm (i)] For every $x\ge 0$, we have $S_d(x)\le\exp\left(\frac{1}{32x}\right)d!(32x)^d$.
\item[\rm (ii)] For $0\le x \le 1/(2d)$, we have $S_d(x)\le2$ and $V_d(x)\le4$.
\item[\rm (iii)] For $x>1/(2d)$, we have $S_d(x)\le\exp\left(-\frac{1}{8x}\right)d!(32x)^d$ and $V_d(x)\le \exp\left(-\frac{1}{8x}\right) (d+1)!(32x)^d$.
\end{itemize}
\end{lemma}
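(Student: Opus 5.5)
The plan is to prove all three parts from one rewriting of the sum. Substituting $k=d-j$,
\[
S_d(x)=\sum_{j=0}^d\frac{d!}{(d-j)!}x^j=d!\,x^d\sum_{k=0}^d\frac{x^{-k}}{k!}\qquad(x>0).
\]
The case $x=0$ is trivial in (i) and (ii): then $S_d(0)=1$ and $V_d(0)=1$, and we read $\exp(1/(32x))$ as $+\infty$. In each part I compare the truncated exponential series $\sum_{k\le d}y^k/k!$, with $y=1/x$, against $e^{y}$ or a rescaled version. For $V_d$, I reduce to $S_d$ using $j+1\le d+1$, or use a direct geometric sum in the small-$x$ regime.

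\emph{(i).} For $0\le k\le d$ we have $x^{-k}=32^k(32x)^{-k}\le 32^d(32x)^{-k}$. Hence
\[
\sum_{k=0}^d\frac{x^{-k}}{k!}\le 32^d\sum_{k\ge0}\frac{(32x)^{-k}}{k!}=32^d\exp\!\left(\frac1{32x}\right).
\]
Multiplying by $d!\,x^d$ gives $S_d(x)\le \exp(1/(32x))\,d!(32x)^d$.

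\emph{(ii).} Since $(d)_j\le d^j$, we get $(d)_jx^j\le (dx)^j\le 2^{-j}$ when $x\le 1/(2d)$. Therefore $S_d(x)\le\sum_{j\ge0}2^{-j}=2$ and $V_d(x)\le\sum_{j\ge0}(j+1)2^{-j}=4$.

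\emph{(iii).} Put $y=1/x$, so $y<2d$. Bound the truncated series by the full one: $\sum_{k\le d}y^k/k!\le e^{y}$. The target inequality $S_d(x)\le e^{-y/8}d!(32x)^d$ then reduces to $e^{9y/8}\le 32^d$. This holds because $e^{9y/8}<e^{9d/4}<10^d\le 32^d$. For $V_d$, use $j+1\le d+1$ to get $V_d(x)\le (d+1)S_d(x)$, which yields the stated bound with $(d+1)!$.

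The only step needing any care is choosing the right crude bound in (iii). Bounding the truncated series by $e^{y}$ is wasteful, but the slack between $e^{9/4}$ and $32$ absorbs it. So I expect no genuine obstacle; the constants $32$ and $1/8$ have been chosen generously enough that each part is a one-line estimate.
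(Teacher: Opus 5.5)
Your proposal is correct and follows the paper's proof essentially step for step. Both use the substitution $k=d-j$ to turn $S_d$ into a truncated exponential series. For (i), both use the termwise bound $32^{k-d}\le 1$; for (ii), both use $(d)_jx^j\le 2^{-j}$. For (iii), both bound the truncated series by $e^{1/x}$, so that the claim reduces to $e^{9/(8x)}\le 32^d$, and then use $V_d\le (d+1)S_d$.
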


\begin{proof}
For the first bound, substitute $k=d-j$ and use
$(d)_{d-k}=d!/k!$ to get
\[
\frac{S_d(x)}{d!(32x)^d}=\sum_{k=0}^d \frac{32^{k-d}}{k!(32x)^k} \le \sum_{k=0}^d \frac{1}{k!(32x)^k}\le \exp\left(\frac{1}{32x}\right).
\]

If $x\le 1/(2d)$, then $(d)_jx^j\le2^{-j}$.
Thus $S_d(x)\le\sum_{j\geq0}2^{-j}=2$ and
$V_d(x)\le\sum_{j\geq0}(j+1)2^{-j}=4$.

If $x>1/(2d)$, reversing the order of summation gives
\[
\frac{S_d(x)}{d!x^d}=\sum_{k=0}^d\frac{x^{-k}}{k!}
\le \exp\left(\frac{1}{x}\right).
\]
Since $x>1/(2d)$, we have
$32^{-d}\exp\left(\frac{1}{x}\right) \le \exp\left(-\frac{1}{8x}\right)$,
giving the required bound for $S_d(x)$.
Finally,
$V_d(x)\le(d+1)S_d(x) \le \exp\left(-\frac{1}{8x}\right) (d+1)!(32x)^d$.
\end{proof}

\end{document}